\newcommand{\RR}{\mathbb{R}}    \newcommand{\LL}{\mathbb{L}}
   \newcommand{\mcU}{\mathcal{U}}
  \newcommand{\mcM}{\mathcal{M}} 
  \newcommand{\mcN}{\mathcal{N}} \newcommand{\mcD}{\mathcal{D}}
 \newcommand{\la}{\leftarrow}
\newcommand{\EE}{\mathbb{E}} \newcommand{\PP}{\mathbb{P}}
\newcommand{\as}{almost surely }
\newcommand{\Var}{\textsc{V}\textrm{ar}}
\newcommand{\st}{such that }
\renewcommand{\leq}{\leqslant}
\renewcommand{\geq}{\geqslant}
\renewcommand{\la}{\lambda}
\newcommand{\al}{\alpha}
\newcommand{\wrt}{with respect to }
\renewcommand{\st}{such that }
\newcommand{\wlg}{without loss of generality }
\newcommand{\rhs}{right hand side }
\newcommand{\lhs}{left hand side }
\newcommand{\Khat}{\widehat{K}}
\newcommand{\rhohat}{\widehat{\rho}}
\newcommand{\eq}{eq.\ }
\title{A Stochastic Algorithm for Parametric Sensitivity in Smoluchowski's Coagulation Equation}
\author{{Isma\"el~F.~Bailleul\footnotemark[2]\;\,\footnotemark[3]
, Peter~L.~W.~Man\footnotemark[4]\;\,
\and Markus~Kraft\footnotemark[4]}}
\begin{document}


\maketitle

\selectlanguage{english}
\begin{abstract}
In this article a stochastic particle system approximation to the parametric sensitivity in the Smoluchowski coagulation equation is introduced. The parametric sensitivity is the derivative of the solution to the equation with respect to some parameter, where the coagulation kernel depends on this parameter. It is proved that the particle system converges weakly to the sensitivity as the number of particles N increases. A Monte Carlo algorithm is developed and variance reduction techniques are applied. Numerical experiments are conducted for two kernels: the additive kernel and one which has been used for studying soot formation in a free molecular regime. It is shown empirically that the techniques for variance reduction are indeed very effective and that the order of convergence is O(1/N). The algorithm is then compared to an algorithm based on a finite difference approximation to the sensitivity and it is found that the variance of the sensitivity estimators are considerably lower than that for the finite difference approach. Furthermore, two methods of establishing `efficiency' are considered and the new algorithm is found to be significantly more efficient.
\end{abstract}

\medskip

\begin{keywords}
Smoluchowski coagulation equation, sensitivity, particle system, coupling, simulations.
\end{keywords}

\begin{AMS}
65C05, 65C35, 68U20, 82C22, 60F05
\end{AMS}

\pagestyle{myheadings}
\thispagestyle{plain}
\markboth{I.~F.~BAILLEUL, P.~L.~W.~MAN AND M.~KRAFT}{
A STOCHASTIC ALGORITHM FOR PARAMETRIC SENSITIVITY}

%

\section{Introduction}

Smoluchowski's description of a coagulation process is made in terms of densities $\mu_t(x)$ of particles of mass $x=1,2,3,\ldots$ and takes the form of an infinite dimensional differential equation
\begin{equation}
\label{SmoDiscrete}
\frac{d}{dt}\mu_t(x) = \frac{1}{2}\sum_{y=1}^{x-1}\, K(y,x-y)\,\mu_t(y)\,\mu_t(x-y) - \mu_t(x)\sum_{y=1}^\infty K(x,y)\,\mu_t(y).
\end{equation}
The symmetric kernel $K(x,y)$ appearing in this equation should be understood as giving the rate at which two particles of mass $x$ and $y$ coagulate. One gets an equivalent and more symmetric equation considering $\mu_t(\cdot)$ as a measure on the set of non-negative integers and looking at the time evolution of observables of the form $(f,\mu_t):=\sum_x f(x)\mu_t(x)$; moments are examples of such observables. In these terms, equation \eqref{SmoDiscrete} takes the form
\begin{equation}
\label{SmoDiscreteSymm}
(f,\mu_t) = (f,\mu_0) + \frac{1}{2}\int_0^t\left(\sum_{x,y\geq 1} \bigl\{f(x+y)-f(x)-f(y)\bigr\}\,K(x,y)\,\mu_s(x)\,\mu_s(y)\right)\textrm{d}s.
\end{equation}
The basic problem we address is to derive a numerical scheme to understand how the solution to this equation depends on possible parameters in the kernel. We shall write $K_\la$ to indicate that $K$ depends on some $d$-dimensional parameter $\la$, and shall write $\mu_t^\la$ for the solution of equation \eqref{SmoDiscreteSymm}. Formally differentiating this equation \wrt $\la$ and setting $\sigma_t^\la = \partial_\la\mu_t^\la$ we get
\begin{equation}
\label{SensitivityEq}
\begin{split}
(f,\sigma_t^\la) = (f,\sigma_0^\la) &+ \frac{1}{2}\int_0^t \Bigl(\sum_{x,y\geq 1}\bigl\{f(x+y)-f(x)-f(y)\bigr\}\,K_\la(x,y)\,\mu^\la_s(x)\,\sigma^\la_s(y)\Bigr)\,\textrm{d}s \\
&+ \int_0^t \Bigl(\sum_{x,y\geq 1}\bigl\{f(x+y)-f(x)-f(y)\bigr\}\,K'_\la(x,y)\,\mu^\la_s(x)\,\mu^\la_s(y)\Bigr)\,\textrm{d}s.
\end{split}
\end{equation}
$K'_\la$ is here the derivative of $K_\la$ \wrt $\la$. Section \ref{SectionTheoretic} presents an algorithm which simulates the sensitivity $\sigma_t^\la$ very accurately and in an efficient way.

\smallskip
\newpage

There are two main motivations for performing sensitivity analysis. The first is for solving inverse problems. If some particle system is governed by a partial differential equation which in turn is dependent upon some unknown parameter, it is desirable to find this parameter. This can be achieved by choosing the parameter value which minimises some residual which is a function of experimentally realised quantities and its computational analogue, which varies with the parameter. The minimisation procedure often uses a gradient search, thus the value of computing parametric derivatives is apparent. Secondly, in considering a scientific model, we often wish to consider the smallest model which reasonably fits the data, in which case sensitivity analysis can be performed to discard parameters with small sensitivity.

\smallskip

Whilst the usual tools of solving differential equations (and their associated numerical schemes) are badly adapted to the above infinite dimensional framework, the stochastic approach of interaction particle systems (basically Markov chains) can be used efficiently, in this setting, as Marcus in \cite{Marcus}, and later Lushnikov in \cite{Lushnikov}, first realised. We follow their approach and give a stochastic particle approximation of the sensitivity $\sigma^\la_t$.

Before running any simulation, one should investigate the well-posedness of equation \eqref{SensitivityEq}: if it had more than one solution it would be unclear what solution a numerical scheme approximates. The most general answers to this theoretical question for Smoluchowski equation were given by Jeon in \cite{Jeon} and Norris in \cite{Nor99}, under a growth assumption on the interaction kernel and a moment condition on the initial condition $\mu_0$. Surprisingly enough, the existence and uniqueness problem for the sensitivity was only solved recently, by Bailleul \cite{BailleulSensitivity}, using methods developed by Kolokoltsov \cite{Kolokoltsov2}. The algorithm developed in this article is the numerical counterpart of this theoretical work\footnote{Consult this article for conditions under which existence and uniqueness of a solution to the sensitivity equation \eqref{SensitivityEq} holds.}.

\smallskip

Three approaches to the simulation of the sensitivity by systems of particles have mainly been used up to now. The first uses weighted particles, as illustrated by Vikhansky and Kraft \cite{Vik04}. They approximate the family of solutions $\bigl\{\mu_t^\la\bigr\}_\la$ by Marcus-Lushnikov processes $\sum_{n\geq 0}w_n(t\,;\,\la)\delta_{x_n(t)}$ where the dependence on $\la$ is entirely put on the weights $w_n(t\,;\,\la)$. A heuristic argument imposes to their derivative to satisfy a kind of Markov evolution rule. Despite its (numerically verified) convergence this approach essentially has the same speed of convergence and variance as the Marcus-Lushnikov process. Further, the paper does not any information regarding computation run times.

The second approach considers adjoint sensitivity \cite{Vik06}. A backward partial differential equation is used rather than a forward one, as used in most other methods. The advantage of this method is that sensitivity for any parameter value is immediate once the computation have been done whereas using the forward equation requires explicit calculation for each parameter value. The disadvantage is that one can only calculate the sensitivities for a particular functional of the particle ensemble.

In the third approach, devised by the authors with J.~R. Norris in the forthcoming article \cite{PeterJamesMarkus}, the sensitivity $\sigma_t^\la$ is approximated by the ratio $(\mu_t^{\la+\delta\la\,;\,N}-\mu_t^{\la\,;\,N})/\delta\la$, where $\mu_t^{\la+\delta\la\,;\,N}$ and $\mu_t^{\la\,;\,N}$ are two Markus-Lushnikov processes corresponding to close parameters, coupled so as to minimise the difference of their random fluctuations around $\mu_t^{\la+\delta\la}$ and $\mu_t^\la$. This approach leads to a massive decrease of variance but does not improve the speed of convergence of the algorithm.

\medskip

The algorithm we propose improves the variance of the sensitivity estimator and requires a much smaller number of particles to converge. This is described in section \ref{SectionTheoretic}. The reader who is not interested in mathematical details can skip sections \ref{SectionChainGenerator} and \ref{SectionCvgceThm} where it is proven that the particle system introduced in section \ref{SectionTheoretic} converges to the sensitivity. Section \ref{SectionAlgo} presents the algorithm we have used to obtain the numerical results of section \ref{SectionNumericalResults}.

\bigskip

\noindent \textbf{Notation.} We shall prove convergence of the particle system in a general setting where masses of particles can take any positive real value. The densities of particles will then be represented by non-negative measures $\mu_t$ and all sums will be replaced by integrals. In this framework we shall write $(f,\mu)$ for $\int f(x)\mu(\textrm{d}x)$ and Smoluchowski's equation \eqref{SmoDiscreteSymm} will be written
$$
(f,\mu_t) = (f,\mu_0) + \frac{1}{2}\int_0^t\int\bigl\{f(x+y)-f(x)-f(y)\bigr\}\,K(x,y)\,\mu_s(\textrm{d}x)\,\mu_s(\textrm{d}y)\,\textrm{d}s.
$$
We shall formally write it as
\begin{equation}
\label{Smo}
\dot\mu_t^\la = \frac{1}{2}K_\la(\mu^\la_t,\mu^\la_t).
\end{equation}
In the same way, we shall write formally equation \eqref{SensitivityEq} for the sensitivity as
\begin{equation}
\label{SensitivityEq1}
\dot{\sigma}_t^{\la} = K_{\la}\left(\mu^{\la}_t,\sigma_t^{\la}\right) + \frac{1}{2}K'_{\la}\left(\mu^{\la}_t,\mu^{\la}_t\right).
\end{equation}
The integral notation is adopted from now on.


\section{Markov chain approximation}
\label{SectionTheoretic}

It is probably fair to say that although the Smoluchowski equation \eqref{SmoDiscreteSymm} is a deterministic evolution equation it should primarily be thought of as  a deterministic large scale picture of a stochastic mesoscopic dynamics. Indeed, Smoluchowski obtained his equation from a representation of the coagulation process using `particles' moving according to Brownian trajectories whose diffusivity depends on their mass and coagulate when they are close to each other. As explained in the article \cite{Chandrasekhar} of Chandrasekhar, section $6$ of chapter III, in a region of space where the coagulating particles are well mixed, one can forget about their spatial location and obtain a mean-field evolution for their mass distribution. This mean-field picture is provided by Smoluchowski equation. Given in its simple form \eqref{SmoDiscrete}, it is not clear at first sight how one should simulate a solution to this infinite dimensional differential system.

The approach developed by Marcus in his seminal paper \cite{Marcus} in a sense comes back to the primary stochastic description of the coagulation phenomenon and relies on the intuitive content of Smoluchowski equation. Two particles of masses $x$ and $y$ coagulate at rate $K(x,y)$ to create a new particle of mass $x+y$: The particles $x$ and $y$ are removed from the system and the particle $x+y$ added. This motivated Marcus, and later Lushnikov, to represent a particle of mass $x$ by a Dirac mass $\delta_x$ at $x$ and to introduce a strong Markov jump process on the space of discrete non-negative measures with the following simple dynamics. Denote by $\mu_0^N = \displaystyle{\frac{1}{N}\sum_i\delta_{x_i}}$ its initial state and by $\mu^N_t$ its state at time $t$. Associate to each pair $1\leq i<j\leq N$ independent exponential random times $T_{ij}$ with parameter $\displaystyle{\frac{K\bigl(x_i,x_j\bigr)}{N}}$ and set
$$
T = \min \{T_{ij}\,;\,1\leq i<j\leq N\}.
$$
The process remains constant on the time interval $[0,T)$, and if $T=T_{pq}$ it has a jump $\displaystyle{\frac{1}{N}\bigl(\delta_{x_p+x_q} - \delta_{x_p}-\delta_{x_q}\bigr)}$ at time $T$. The dynamics then starts afresh. Note that the new measure at time $T$ is still non-negative, and that the above description leads to a mean jump of the process during a time interval $[t,t+\delta t]$ equal to\footnote{$\mu^N_t$ denotes the state of the process at time $t$.}
$$
\delta t \sum_{x,x'}\bigl(\delta_{x+x'}-\delta_x-\delta_{x'}\bigr)\,K(x,y)\,\mu^N_t(x)\,\mu^N_t(x')
$$
up to terms of order $\frac{\delta t}{N}$ and $o(\delta t)$. This property makes it clear that the process converges to a solution of the Smoluchowski equation as $N$ goes to infinity (under proper conditions), a fact which was used for simulation purposes long before it was proved under general conditions in \cite{Nor99}.

Following the heuristic approach of Marcus and Lushnikov, we are going to give in the next section a particle description of the sensitivity equation
\begin{equation}
\label{SensitivityEq2}
\dot{\sigma}_t^{\la} = K_\la\bigl(\mu^{\la}_t,\sigma_t^{\la}\bigr) + \frac{1}{2}K'_\la\bigl(\mu^{\la}_t,\mu^{\la}_t\bigr).
\end{equation}
To that end, introduce the notation $K'_+ := K'\vee 0$ and $K'_- := K'\wedge 0$ (dropping the index $\la$ for it will be fixed), and write, for a signed measure $\sigma$,
$$
\sigma = \sigma {\bf 1}_{\frac{d\sigma}{d|\sigma|}>0} - |\sigma| {\bf 1}_{\frac{d\sigma}{d|\sigma|}<0} =: \sigma^+ - \sigma^-,
$$
Using this notation, re-write equation \eqref{SensitivityEq2} as
\begin{equation}
\label{CoupledSigma}
\dot\sigma^+_t - \dot\sigma^-_t = \Bigl(K_\la\bigl(\mu^{\la}_t,\sigma_t^+\bigr) + \frac{1}{2}K'_+\bigl(\mu^{\la}_t,\mu^{\la}_t\bigr)\Bigr) - \Bigl(K_\la\bigl(\mu^{\la}_t,\sigma_t^-\bigr) + \frac{1}{2}K'_-\bigl(\mu^{\la}_t,\mu^{\la}_t\bigr)\Bigr)
\end{equation}
This equation will motivate the introduction of the Markov chain described in the next section.

\smallskip

\noindent \textbf{Notation.} Given three non-negative measures $\mu,\sigma^+,\sigma^-$ on $(0,\infty)$ we shall adopt the notation $\mu\oplus\sigma^+\oplus\sigma^-$ to denote the $\RR_+^3$-valued measure on $(0,\infty)^3$. It will clarify the notation to denote by $x\oplus y\oplus z$ the point of $\RR^3$ with co-ordinates $x,y$ and $z$. Given non-negative functions $f,g,h$ on $(0,\infty)$ set
$$
\bigl(f\oplus g\oplus h, \mu\oplus\sigma^+\oplus\sigma^-\bigr) := (f,\mu)\oplus (g,\sigma^+)\oplus (h,\sigma^-).
$$
As we shall simulate both $\mu_t$ and $(\sigma_t^+,\sigma_t^-)$ at the same time, our approximating Markov chain will take values in the set
\begin{equation*}
\mcN :=\bigl\{\mu\oplus\sigma^+\oplus\sigma^-\,;\,\mu,\sigma^+,\sigma^- \textrm{ non-negative discrete measures on }(0,\infty)\bigr\}.
\end{equation*}

\subsection{Chain, generator}
\label{SectionChainGenerator}

In the same way as the right hand side of Smoluchowski equation \eqref{Smo} can be interpreted as the coagulation of particles of $\mu_t$ of mass $x$ and $y$ at rate $K(x,y)$, we are going to follow what equation \eqref{CoupledSigma} suggest and interpret the term $K(\mu_t,\sigma^+_t)$ appearing there as the coagulation of a particle in $\mu_t$ of mass $x$ with a particle in $\sigma^+_t$ of mass $y$ at rate $K(x,y)$. Note that this leads to a jump $\delta_{x+y}-\delta_x-\delta_y$ of $\sigma^+$ which could transform the non-negative measure $\sigma^+_t$ into a signed measure, as the term $\delta_x$ does not necessarily appear inside $\sigma^+_t$ (while $\delta_y$ does). We shall take care of this by adding $\delta_x$ to the negative part $\sigma^-_t$ of $\sigma_t$ instead of subtracting it from $\sigma^+_t$; as we are only interested in the difference $\sigma^+_t - \sigma^-_t(=\sigma_t)$ this has no consequence. Note also that the particle $\delta_x$ from $\mu_t$ used in that coagulation event will not be removed from $\mu_t$. Similar interpretations of the terms $K(\mu_t,\sigma^-_t)$ and $\frac{1}{2}K'_\pm(\mu_t,\mu_t)$ lead us to define the following Markov chain $\Theta_t = X_t\oplus Y_t\oplus Z_t$ on $\mcN$. Denote by $\Theta_0 = \displaystyle{\Bigl(\sum_{i=1..m}\delta_{x_i}\Bigr)\oplus\Bigl(\sum_{k=1..p}\delta_{y_k}\Bigr)\oplus\Bigl(\sum_{\ell=1..q}\delta_{z_\ell}\Bigr)}$ its starting point.

\subsubsection{Dynamics}
\label{Dynamics}
Associate to each pair

$\bullet$ $1\leq i<j\leq m$, exponential random variables $R_{ij}, S_{ij}$ and $T_{ij}$ with respective parameters $K(x_i,x_j)$ and $K'_+(x_i,x_j)$ and $K'_-(x_i,x_j)$,

$\bullet$ $(i,k)\in \llbracket 1,m\rrbracket\times \llbracket 1,p\rrbracket$ an exponential random variable $U_{ik}$ with parameter $K(x_i,y_k)$,

$\bullet$ $(i,\ell)\in \llbracket 1,m\rrbracket\times \llbracket 1,q\rrbracket$ an exponential random variable $V_{i\ell}$ with parameter $K(x_i,z_\ell)$.

\noindent All these random variables are supposed to be independent. Denoting by $W$ the first event happening in the system
$$
W = \min \bigl\{R_{ij}, S_{ij}, T_{ij}, U_{ik}, V_{i\ell}\,\,; \,\,1\leq i< j\leq m,\,\, k\in\llbracket 1,p\rrbracket,\,\,\ell\in\llbracket 1,q\rrbracket\bigr\},
$$
the jump $\Delta\Theta$ of the Markov chain depends on which of these exponential clocks rings first. For future reference, the different types of events that can happen are numbered. If
\begin{displaymath}
\begin{array}{llll}
W=R_{ij},    & \textrm{then} & \Delta\Theta = \bigl(\delta_{x_i+x_j}-\delta_{x_i}-\delta_{x_j}\bigr)\oplus 0\oplus 0 & \textrm{(event type: } 0 \,\,\,) \\
W=S_{ij},    & \textrm{then} & \Delta\Theta = 0 \oplus\delta_{x_i+x_j}\oplus\bigl(\delta_{x_i}+\delta_{x_j}\bigr) & \textrm{(event type: } 1^+) \\
W=T_{ij},    & \textrm{then} & \Delta\Theta = 0\oplus \bigl(\delta_{x_i}+\delta_{x_j}\bigr)\oplus\delta_{x_i+x_j} & \textrm{(event type: } 1^-) \\
W=U_{ik},    & \textrm{then} & \Delta\Theta = 0\oplus\bigl(\delta_{x_i+y_k}-\delta_{y_k}\bigr)\oplus\delta_{x_i} & \textrm{(event type: } 2^+) \\
W=V_{i\ell}, & \textrm{then} & \Delta\Theta = 0\oplus \delta_{x_i}\oplus\bigl(\delta_{x_i+z_\ell}-\delta_{z_\ell}\bigr) & \textrm{(event type: } 2^-) \\
\end{array}
\end{displaymath}
The process $\Theta_t$ will be constant on the time interval $[0,W)$ and have jump $\Delta\Theta$ at time $W$. The dynamics then starts afresh.

\smallskip

\noindent \textbf{Remark.}
\label{Remark_label}
\emph{It is clear from this description that for any function $\varphi$ satisfying the relation $\varphi(a+b)\geq\varphi(a)-\varphi(b)$ for any $a,b >0$, the function $(\varphi,Y_t+Z_t)$ increases with time. This fact is useful for the convergence result stated in theorem \ref{ThmConvergence}}.

Given any positive integer $N$, define $\frac{1}{N}\Theta_t$ as the element $\frac{1}{N} X_t\oplus \frac{1}{N} Y_t\oplus \frac{1}{N} Z_t$ of $\mcN$, and set
$$
\Theta^N_t := \frac{1}{N}\Theta_{\frac{t}{N}} =: \mu_t^N\oplus\sigma_t^{+,N}\oplus\sigma_t^{-,N}.
$$
Note that the first component of $\Theta^N_t$ is the usual Marcus-Lushnikov process. Set $\sigma_t^N = \sigma_t^{+,N}-\sigma_t^{-,N}$. We are going to prove in theorem \ref{ThmConvergence} that $\sigma_t^N$ converges in law to the sensitivity $\sigma_t$. \textit{Those who do not care about the mathematical details of such a statement can skip  the remaining of this section and go to section \ref{SectionAlgo}}.

\subsubsection{Generator}
\label{Generator}

The analytic description of the Markov chain $\{\Theta^N_t\}_{t\geq 0}$ in terms of its generator will be useful in proving theorem \ref{ThmConvergence}. Given a non-negative measure $\mu$ of the form $\frac{1}{N}\sum\delta_{x_i}$ define the rescaled counting measure on ordered pairs of masses of distinct particles by
$$
\widetilde\mu(A\times A') := \mu(A)\,\mu(A') - \frac{1}{N}\mu(A\cap A'),
$$
and define the measure ${\bf G}^{(N)}(\mu)$ and the operator ${\bf P}^{(N)}(\mu)$ setting for any measurable bounded function $f$
\begin{equation*}
\begin{split}
&\Bigl(f,{\bf G}^{(N)}(\mu)\Bigr) = \frac{1}{2}\int\bigl\{f(x+x')-f(x)-f(x')\bigr\}\,K(x,x')\,\widetilde\mu(\textrm{d}x,\textrm{d}x')
\\
&\Bigl(f,{\bf P}^{(N)}(\mu)\Bigr) = \frac{1}{2}\int\bigl\{f(x+x')-f(x)-f(x')\bigr\}^2 \, K(x,x')\, \widetilde\mu(\textrm{d}x,\textrm{d}x').
\end{split}
\end{equation*}
Given $x>0$ and a non-negative measure $\gamma$ on $\RR^*_+$ we shall write $K(x,\gamma)$ for the integral $\int K(x,y)\gamma(\textrm{d}y)$.

\smallskip

Denote by ${\bf H}^{(N)}$ the generator of the process $\bigl\{\Theta_t^N\bigr\}_{0\leq t\leq T}$; for any bounded measurable functions $f,g,h$ on $(0,\infty)$ the $\RR^3$-valued process
\begin{equation*}
\begin{split}
M^{f,g,h\,;\,N}_t := \bigl(f\oplus g\oplus h, \Theta_t^N\bigr) - \bigl(f\oplus g\oplus h,\Theta_0^N\bigr) - \int_0^t \Bigl(f\oplus g\oplus h,{\bf H}^{(N)}\bigl(\Theta^N_s\bigr)\Bigr)\,\textrm{d}s
\end{split}
\end{equation*}
is a martingale (with respect to its natural filtration). For a measure $\mu$ of the form $\frac{1}{N}\sum \delta_{x_i}$ and $\Theta = \mu\oplus\sigma^+\oplus\sigma^-\in\mcN$ we have
\begin{align}
\label{GeneratorTheta}
\lefteqn{
\Bigl(f\oplus g\oplus h,{\bf H}^{(N)}(\Theta)\Bigr) = }\nonumber\\
&&
\begin{split}
& \hspace{0.2cm}\bigl(f,{\bf G}^{(N)}(\mu)\bigr) \quad \oplus \nonumber\\
& \hspace{0.2cm}\left\{\frac{1}{2}\int \Bigl\{g(x+x')K'_+(x,x')+\bigl(g(x)+g(x')\bigr)K'_-(x,x')\Bigr\}\widetilde\mu(\textrm{d}x,\textrm{d}x')
\right.\\
& \hspace{0.2cm}+ \left.\int\Bigl\{\bigl(g(x+y)-g(y)\bigr)K(x,y)\sigma^+(\textrm{d}y) + g(x)K(x,\sigma^-)\Bigr\}\mu(\textrm{d}x)\right\}\; \oplus \\
& \hspace{0.2cm}\left\{\frac{1}{2}\int \Bigl\{h(x+x')K'_-(x,x')+\bigl(h(x)+h(x')\bigr)K'_+(x,x')\Bigr\}\widetilde\mu(\textrm{d}x,\textrm{d}x')
\right.\\
& \hspace{0.2cm}+ \left.\int\Bigl\{\bigl(h(x+z)-h(z)\bigr)K(x,z)\sigma^-(\textrm{d}z) + h(x)K(x,\sigma^+)\Bigr\}\mu(\textrm{d}x)\right\}
\end{split}\\
\end{align}

Compare this formula with the description of the dynamics given in the section \ref{Dynamics}.
\begin{romannum}
   \item Event $\{W=R_{ij}\}$ corresponds to the term $\bigl(f,{\bf G}^{(N)}(\mu)\bigr)\oplus 0\oplus 0$;
   \item Event $\{W=S_{ij}\}$ corresponds to the term $\frac{1}{2}\int 0\,\oplus\, g(x+y)\,\oplus\,\bigl(h(x)+h(y)\bigr)\,K'_+(x,y)\widetilde\mu(\textrm{d}x,\textrm{d}y)$; a similar term corresponds to the event $\{W=T_{ij}\}$;
   \item Event $\{W=U_{ik}\}$ corresponds to the term $\int \bigl\{0\,\oplus\,\bigl(g(x+z)-g(z)\bigr)\,\oplus\,h(x)K(x,z)\sigma^+(\textrm{d}z)\bigr\}\mu(\textrm{d}x)$; a similar term corresponds to the event $\{W=V_{i\ell}\}$.
\end{romannum}
The sum of all these terms gives $\left(f\oplus g\oplus h,{\bf H}^{(N)}(\Theta)\right)$.

Following a classical approach, the study of martingales of the form $M^{f,g,h\,;\,N}_\cdot$ will be our main tool in the proof of the convergence theorem. The explicit expression of the bracket of $M^{f,g,h\,;\,N}$ will be useful in that task. We have
$$
\bigl<M^{f,g,h\,;\,N}\bigr>_t = \frac{1}{N}\int_0^t \Bigl(f\oplus g\oplus h, {\bf Q}^{(N)}\bigl(\Theta_s^N\bigr)\Bigr)\textrm{d}s,
$$
where ${\bf Q}^N\bigl(\Theta\bigr)$ is characterised on measures $\Theta$ of the form $\Bigl(\frac{1}{N}\sum\delta_{x_i}\Bigr)\oplus\sigma^+\oplus\sigma^-$ by the formula
\begin{align*}
\lefteqn{
\Bigl(f\oplus g\oplus h,{\bf Q}^{(N)}(\Theta)\Bigr) =}\nonumber\\
&&
\begin{split}
& \hspace{0.2cm}\bigl(f,{\bf P}^{(N)}(\mu)\bigr) \quad \oplus\\
& \hspace{0.2cm}\left\{\frac{1}{2}\int \Bigl\{g(x+x')^2K'_+(x,x')+\bigl(g(x)+g(x')\bigr)^2 \, K'_-(x,x')\Bigr\} \widetilde{\mu}(\textrm{d}x,\textrm{d}x') \right. \nonumber\\
& \hspace{0.2cm}+\left.\int\Bigl\{\bigl(g(x+y)-g(y)\bigr)^2 \,K(x,y) \,\sigma^+(\textrm{d}y) + g(x)^2 \, K(x,\sigma^-)\Bigr\}\,\mu(\textrm{d}x)\right\}\;\oplus \nonumber \\
 & \hspace{0.2cm}\left\{\frac{1}{2}\int \Bigl\{h(x+x')^2K'_-(x,x')+\bigl(h(x)+h(x')\bigr)^2 K'_+(x,x')\Bigr\}\widetilde\mu(\textrm{d}x,\textrm{d}x') \right. \nonumber\\
 & \hspace{0.2cm}+\left.\int\Bigl\{\bigl(h(x+z)-h(z)\bigr)^2 \, K(x,z)\sigma^-(\textrm{d}z) + h(x)^2 \, K(x,\sigma^+)\Bigr\}\mu(\textrm{d}x)\right\}
\end{split}
\end{align*}

\subsection{Convergence theorem}
\label{SectionCvgceThm}

Denote by $\mcU$ a bounded open set of some $\RR^d$ indexing the family $K_\la$ of kernels. Let $\varphi : (0,\infty)\rightarrow \RR_+$ be a sublinear function: $\varphi(s x) \leq s\varphi(x)$ for any $s>0$ and $x\in (0,\infty)$; such a function is also subadditive: $\varphi(x+y)\leq\varphi(x)+\varphi(y)$, for any $x,y\in (0,\infty)$. We shall suppose that the interaction kernels $K_\la$ satisfy the growth condition
$$
K_\la(x,y) \leq \varphi(x)\varphi(y)
$$
for any $x,y \in (0,\infty),\,\la\in\mcU$, and that the initial condition of Smoluchowski equation \eqref{SmoDiscreteSymm} (or better its `continuous mass version') satisfies the moment condition
\begin{equation}
\label{ConditionMoment}
\int \varphi(x)^{4+\epsilon} \mu_0(\textrm{d}x)<\infty
\end{equation}
for some (small) $\epsilon>0$. We shall suppose in theorem \ref{ThmConvergence} that $\varphi^2$ is sub-additive; together with the above moment condition \eqref{ConditionMoment} on $\mu_0$ this implies that Smoluchowski equation has a unique strong solution\footnote{In the sense defined in \cite{Nor99}.}, defined for all non-negative times.

The following norm was used on the space $\mcM_1$ of signed Borel measures $\mu$ \st $\|\mu\|_1:=\bigl(\varphi,|\mu|\bigr)<\infty$, in the article \cite{BailleulSensitivity} where the following key result about sensitivity is proved.

\begin{theorem}
\label{ThmSensitivity2}
Assume the moment condition \eqref{ConditionMoment} and that $K_\la(x,y)$ and $\big|K'_\la(x,y)\big|$ are both bounded above by $\varphi(x)\varphi(y)$ for any $x,y$. Then the map $(t,\la)\in [0,\infty)\times\mathcal{U}\mapsto \mu_t^{\la}\in\bigl(\mathcal{M}_1,\,\|.\|_1\bigr)$, is a $\mathcal{C}^1$ function and its derivative $\sigma_t^\la$ satisfies the following equation for any bounded measurable function $f$(\footnote{We write here $\{f\}(x,y)$ for $f(x+y)-f(x)-f(y)$.}).
\begin{align*}
\left(f,\sigma_t^{\la}\right)
&= \left(f,\sigma_0^{\la}\right) + \int_0^t \int \{f\}(x,y)K_\la(x,y)\mu^{\la}_s(dx)\sigma_s^{\la}(dy)ds\\
&+ \frac{1}{2}\int_0^t\int \{f\}(x,y)K'_\la(x,y)\mu^{\la}_s(dx)\mu_s^{\la}(dy)ds
\end{align*}
The function $\sigma_\cdot^\la$ is the only $\bigl(\mathcal{M}_1,\,\|.\|_1\bigr)$-valued solution of this equation.
\end{theorem}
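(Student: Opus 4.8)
The plan is to establish two things: that the sensitivity equation has a solution given by $\partial_\la \mu_t^\la$, and that this solution is unique in $(\mathcal{M}_1, \|\cdot\|_1)$. The starting point is the fact quoted from \cite{Nor99} (using that $\varphi^2$ is subadditive and the moment condition \eqref{ConditionMoment}): Smoluchowski's equation \eqref{Smo} has a unique strong solution $\mu_t^\la$ for each $\la$, and moments of order $4+\epsilon$ against $\varphi$ are propagated in time and stay locally bounded in $(t,\la)$. First I would set up the Banach space $\bigl(\mathcal{M}_1, \|\cdot\|_1\bigr)$ and record the key bilinear estimate: since $K_\la(x,y) \leq \varphi(x)\varphi(y)$ and $\varphi$ is subadditive, for signed measures $\nu, \rho$ one has $\bigl\|K_\la(\nu,\rho)\bigr\|_1 \lesssim \|\nu\|_{1,\varphi^2}\,\|\rho\|_1 + \|\nu\|_1\,\|\rho\|_{1,\varphi^2}$ type bounds, where the higher moment $\|\cdot\|_{1,\varphi^2}$ of $\mu_s^\la$ is controlled by the propagation of moments. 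The same holds for $K'_\la$. This is the engine behind every subsequent estimate.

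Next I would treat \emph{uniqueness} by a Gronwall argument, which is the easier half. If $\sigma^{(1)}, \sigma^{(2)}$ are two $(\mathcal{M}_1,\|\cdot\|_1)$-valued solutions, their difference $\tau_t = \sigma^{(1)}_t - \sigma^{(2)}_t$ satisfies $\dot\tau_t = K_\la(\mu_t^\la, \tau_t)$ with $\tau_0 = 0$; testing against functions $f$ with $|f| \leq \varphi$ and using the bilinear estimate gives $\|\tau_t\|_1 \leq C\int_0^t (1 + \|\mu_s^\la\|_{1,\varphi^2}) \|\tau_s\|_1\, ds$, so $\tau \equiv 0$ by Gronwall on any finite time interval. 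The only subtlety is making sure one may test against $\varphi$ itself (an unbounded function) rather than just bounded $f$; this is handled by truncation $\varphi \wedge n$ together with the finiteness of the relevant moments, exactly as in \cite{Nor99}.

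For \emph{existence and $\mathcal{C}^1$-regularity}, the route I would follow (this is the method attributed to Kolokoltsov \cite{Kolokoltsov2} and carried out in \cite{BailleulSensitivity}) is to construct a candidate derivative directly as the solution of the linear nonautonomous equation $\dot\sigma_t = K_\la(\mu_t^\la, \sigma_t) + \tfrac12 K'_\la(\mu_t^\la,\mu_t^\la)$ with $\sigma_0 = \partial_\la \mu_0^\la$ — this is a linear ODE in the Banach space $(\mathcal{M}_1,\|\cdot\|_1)$ with a time-dependent bounded-in-norm generator and an inhomogeneous term whose $\|\cdot\|_1$-norm is integrable in $t$ (again by moment propagation), so it has a unique solution by Duhamel/Picard iteration. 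One then must verify that this $\sigma_t$ genuinely equals $\partial_\la \mu_t^\la$: write the difference quotient $\bigl(\mu_t^{\la+h} - \mu_t^\la\bigr)/h$, subtract the equation it satisfies from the equation for $\sigma_t$, and show the remainder tends to $0$ in $\|\cdot\|_1$ as $h \to 0$, using (i) the bilinear estimates, (ii) continuity of $\la \mapsto \mu^\la$ in $\|\cdot\|_1$ (itself another Gronwall argument), and (iii) differentiability of $\la \mapsto K_\la$ with a local Lipschitz bound on $K'_\la$. Joint continuity of $(t,\la)\mapsto\sigma_t^\la$ then follows from the same estimates.

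The main obstacle is bookkeeping the unbounded weight $\varphi$: all the bilinear estimates lose a power of $\varphi$ on one factor, so to close the argument one needs moments of $\mu_s^\la$ of order strictly higher than what appears naively — this is precisely why the hypothesis demands $\varphi^{4+\epsilon}$-moments and why $\varphi^2$ must be subadditive (so that the quadratic term $K'_\la(\mu_s^\la,\mu_s^\la)$ lands in a space where $\varphi^2$-moments still make sense and propagate). Keeping track of which moment is needed at each step, and confirming \eqref{ConditionMoment} is exactly enough, is the delicate part; the functional-analytic skeleton (linear ODE in a Banach space, Duhamel, Gronwall) is otherwise routine. Since this is established in \cite{BailleulSensitivity}, in the paper itself one would simply cite that work for the full details.
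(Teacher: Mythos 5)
The paper does not actually prove Theorem \ref{ThmSensitivity2}; it simply states it as a result established in \cite{BailleulSensitivity} and uses it as input. Your proposal correctly recognises this at the end, so there is no in-paper proof to compare against. That said, your reconstruction of what \cite{BailleulSensitivity} must do (Duhamel/Picard for the linear nonautonomous equation in $(\mathcal{M}_1,\|\cdot\|_1)$, difference-quotient analysis to identify the solution with $\partial_\la\mu^\la_t$, moment propagation from \cite{Nor99}) is a sensible and plausible outline of the Kolokoltsov-style argument.

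One concrete point that is weaker than you suggest: the uniqueness Gronwall as you write it does not close. Testing $\dot\tau_t = K_\la(\mu_t^\la,\tau_t)$ against $f$ with $|f|\leq\varphi$, the bound $|\{f\}(x,y)|\leq 2\varphi(x)+2\varphi(y)$ together with $K_\la\leq\varphi\otimes\varphi$ produces the two terms
$\bigl(\varphi^2,\mu_s^\la\bigr)\,\bigl(\varphi,|\tau_s|\bigr)$ and $\bigl(\varphi,\mu_s^\la\bigr)\,\bigl(\varphi^2,|\tau_s|\bigr)$,
and the second involves $\bigl(\varphi^2,|\tau_s|\bigr)$, which is \emph{not} controlled by $\|\tau_s\|_1$ for a general element of $\mathcal{M}_1$. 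Your displayed Gronwall inequality silently drops this term. Closing it requires either working a priori in a smaller space where $\varphi^2$-moments of $\tau$ are finite and propagated, or exploiting the sign $\{\varphi\}\leq 0$ together with a decomposition $\tau=\tau^+-\tau^-$, or a weight-interpolation/truncation argument of the kind used in \cite{Nor99}. This is not merely the bookkeeping subtlety you describe under ``testing against $\varphi$ itself''; it is the place where the hypothesis $\varphi^2$ subadditive and the $\varphi^{4+\epsilon}$-moment condition actually earn their keep, and a reader would want to see which of these routes is taken rather than a bare Gronwall.
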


\smallskip

We shall consider here a weaker topology than the $\|\cdot\|_1$-topology. We shall equip the space $\RR_+^{\oplus 3}$ with the $\ell^1$-distance: $\|x\oplus y\oplus z - x'\oplus y'\oplus z'\| := |x-x'|+ |y-y'| + |z-z'|$. Write $\mcM^{\oplus 3}$ for the set of non-negative $\RR_+^{\oplus 3}$-valued measures on $\RR^*_+$, and let $d$ be any distance on $\mcM^{\oplus 3}$ metrising weak convergence: $\{\Theta_n\}_{n\geq 0}$ converges to $\Theta_\infty$ iff for any bounded continuous functions $f,g,h$ on $\RR^*_+$, we have $\bigl(f\oplus g\oplus h, \Theta_n\bigr) \rightarrow\bigl(f\oplus g\oplus h, \Theta_\infty\bigr)$. The space $\bigl(\mcM^{\oplus 3},d\bigr)$ is a Polish space with $\mcN$ as a dense subset.

Fix a positive time $T$. We shall state our convergence theorem in the functional setting $\mcD\bigl([0,T],(\mcM^{\oplus 3},d)\bigr)$ of c\`adl\`ag paths from $[0,T]$ to $(\mcM^{\oplus 3},d)$. This space will be equipped with its Skorokhod topology, for which we refer the reader to the books \cite{Billingsley} or \cite{Pollard} of Billingsley and Pollard. Last, we shall denote by $d_0$ any distance on the set of all non-negative Borel measures on $(0,\infty)$ metrising the following notion of convergence\footnote{This notion of convergence, usually called vague convergence, is weaker than weak convergence.}: $\{\mu_n\}_{n\geq 0}$ converges to $\mu_\infty$ iff we have $(f,\mu_n)\rightarrow (f,\mu_\infty)$ for any bounded continuous measurable function $f$ \textit{with bounded support}.

\smallskip

The starting point $\Theta_0^N$ of $\Theta^N_\cdot$ will be of the form $\frac{1}{N}X_0^N\oplus\frac{1}{N}Y_0^N\oplus\frac{1}{N}Z_0^n$ for some non-negative integer-valued finite measures $X_0^N, Y_0^N, Z_0^N$ on $(0,\infty)$. To shorten the notation we shall denote by
$$
\Theta^N_t =: \mu_t^N\oplus\sigma^{+,N}_t\oplus\sigma^{-,N}_t
$$
the process starting from $\Theta_0^N$ constructed in section \ref{SectionChainGenerator} and corresponding to a given parameter $\la$.

\smallskip

We shall suppose that the function $\varphi$ controlling the kernels $K_\la$ satisfies identity \eqref{CondVarphi1} below. As noted in the remark on page \pageref{Remark_label}, this hypothesis implies that the function $\bigl(\varphi,\sigma^{+,N}_t+\sigma^{-,N}_t\bigr)$ increases with time; this fact will enable us to control $\Theta^N$. Note that this hypothesis is weaker than requiring that $\varphi$ be increasing.

\smallskip

\begin{theorem}[Convergence of the particle system]
\label{ThmConvergence}
Let $K_\la(\cdot,\cdot) : \RR^*_+\times\RR^*_+ \rightarrow [0,+\infty)$ be a family of symmetric kernels indexed by $\la\in\mcU$. We suppose the map $(\la\,;\,x,x')\mapsto K_\la(x,x')$ continuous and differentiable \wrt $\la$, with a derivative $K_\la'(x,x')$ continuous \wrt $(x,x')$. Let $\varphi\geq 1$ be a subadditive function whose square is also subadditive. Assume that
\begin{equation}
\label{CondVarphi1}
\varphi(a+b)\geq\varphi(a)-\varphi(b), \quad \textrm{for any positive }a,b,
\end{equation}
\begin{equation}
\label{CondVarphi2}
\begin{split}
\forall\,\la\in\mcU, \forall\,x,x',y\in\RR^*_+, \quad &K_\la(x,x') \leq \varphi(x)\,\varphi(x'), \\
                                                                 &\bigl|K_\la'(x,y)\bigr| \leq \varphi(x)\,\varphi(y), \\
\end{split}
\end{equation}
\begin{equation}
\label{CondVarphi3}
\frac{K_\la(x,x')}{\varphi(x)\,\varphi(x')}\;\;\textrm{ and } \;\;\frac{K'_\la(x,x')}{\varphi(x)\,\varphi(x')} \underset{x+x'\rightarrow \infty}{\longrightarrow} 0
\end{equation}
Fix $\la\in\mcU$ and write $\Theta^N_\cdot$ for the corresponding process in $\mcN$, started from $\mu_0^N\oplus\sigma^{+,N}_0\oplus\sigma^{-,N}_0$. Suppose that $\mu_0$ satisfies the moment condition \eqref{ConditionMoment} for some (small) $\epsilon$, that
\begin{equation}
\label{CondConvMu0}
d_0\Bigl(\varphi\mu_0^N,\varphi\mu_0\Bigr) \rightarrow 0,
\end{equation}
and that there exists a positive constant $C$ bigger than $\bigl(\varphi^2,\mu_0^N\bigr) $ and $\bigl(\varphi,\sigma^{+,N}_0+\sigma^{-,N}_0\bigr)$ for any $N\geq 1$.

Then the sequence of the laws of the processes $\Theta^N$ is tight and any (random) weak limit is almost surely of the form $\bigl\{\mu_t\oplus \sigma^{+,\infty}_t\oplus \sigma^{-,\infty}_t\bigr\}_{0\leq t\leq T}$, with
$$
\sigma^{+,\infty}_t-\sigma^{-,\infty}_t = \sigma_t.
$$
\end{theorem}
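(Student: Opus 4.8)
The plan is to realise each $\Theta^N_\cdot$ as a solution of a martingale problem and then run the classical two-step programme: first prove tightness of the laws of $\Theta^N_\cdot$ in $\mcD\bigl([0,T],(\mcM^{\oplus 3},d)\bigr)$, then identify every subsequential weak limit by passing to the limit in the martingale characterisation, using uniqueness of the strong Smoluchowski solution and the uniqueness assertion of Theorem \ref{ThmSensitivity2} to pin it down. The basic tools are the $\RR^3$-valued martingales $M^{f,g,h\,;\,N}_\cdot$ with generator ${\bf H}^{(N)}$, together with their brackets $\langle M^{f,g,h\,;\,N}\rangle_t=\frac1N\int_0^t\bigl(f\oplus g\oplus h,{\bf Q}^{(N)}(\Theta^N_s)\bigr)\,\textrm{d}s$.

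The first and most important step is a set of moment bounds uniform in $N$ and in $t\in[0,T]$. Since $\varphi^2$ is subadditive, a coagulation event (replacing $\delta_x+\delta_{x'}$ by $\delta_{x+x'}$) can only decrease $(\varphi^2,\cdot)$, so $(\varphi^2,\mu^N_t)\le(\varphi^2,\mu^N_0)\le C$ and, as $\varphi\ge1$, also $(\varphi,\mu^N_t)\le C$ for all $t$. For the signed part, \eqref{CondVarphi1} guarantees --- this is the remark on page \pageref{Remark_label} --- that $(\varphi,Y_t+Z_t)$, and hence $(\varphi,\sigma^{+,N}_t+\sigma^{-,N}_t)$, is non-decreasing, so its supremum over $[0,T]$ is attained at $T$; feeding $g=h=\varphi$, $f=0$ into \eqref{GeneratorTheta}, bounding the rates by $\varphi\otimes\varphi$ via \eqref{CondVarphi2}, using subadditivity of $\varphi$ and the bound just found for $\mu^N$, Dynkin's formula then yields a Gronwall inequality $\frac{d}{dt}\EE\bigl(\varphi,\sigma^{+,N}_t+\sigma^{-,N}_t\bigr)\le C\bigl(1+\EE(\varphi,\sigma^{+,N}_t+\sigma^{-,N}_t)\bigr)$, hence $\EE\sup_{t\le T}\bigl(\varphi,\sigma^{+,N}_t+\sigma^{-,N}_t\bigr)\le Ce^{CT}$ uniformly in $N$. (The stronger moment condition \eqref{ConditionMoment} on $\mu_0$ will be needed mainly through the uniqueness theorems in the identification step, not for the particle system itself.) The same estimates applied to ${\bf Q}^{(N)}$ give $\langle M^{f,g,h\,;\,N}\rangle_t=O(t/N)$ for bounded $f,g,h$, so the martingale parts become negligible in the limit.

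Tightness should then follow along standard lines. Because $\varphi\ge1$, the total masses of $\mu^N_t$ and $\sigma^{\pm,N}_t$ are dominated by the corresponding $\varphi$-moments, hence uniformly controlled; together with the decay at infinity built into \eqref{CondVarphi3} and with \eqref{CondConvMu0} at time $0$, this should supply the compact-containment condition in $(\mcM^{\oplus 3},d)$. For a fixed triple of bounded continuous functions $f,g,h$ the drift $s\mapsto\bigl(f\oplus g\oplus h,{\bf H}^{(N)}(\Theta^N_s)\bigr)$ is bounded uniformly in $N$ and $s\le T$ and the martingale part has vanishing bracket, so Aldous's criterion gives tightness of $\bigl(f\oplus g\oplus h,\Theta^N_\cdot\bigr)$ in $\mcD([0,T],\RR^3)$; ranging over a countable convergence-determining family of $(f,g,h)$ upgrades this to tightness of $\Theta^N_\cdot$ itself.

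For the identification, along a subsequence with $\Theta^N_\cdot\Rightarrow\Theta^\infty_\cdot=\mu^\infty_\cdot\oplus\sigma^{+,\infty}_\cdot\oplus\sigma^{-,\infty}_\cdot$ I would use Skorokhod's representation theorem to make the convergence almost sure and let $N\to\infty$ in the identity defining $M^{f,g,h\,;\,N}_t$: the left side tends to $0$ in probability, the two endpoint terms converge by weak convergence and \eqref{CondConvMu0}, and in the drift term $\widetilde\mu^N\to\mu^\infty\otimes\mu^\infty$ with the $\frac1N$-correction vanishing. I expect the one genuinely delicate point --- the main obstacle --- to be the passage to the limit in the quadratic integrals such as $\int\{f\}(x,x')K_\la(x,x')\,\widetilde\mu^N(\textrm{d}x,\textrm{d}x')$ and $\int\bigl(g(x+y)-g(y)\bigr)K_\la(x,y)\,\sigma^{+,N}(\textrm{d}y)\,\mu^N(\textrm{d}x)$, whose integrands are continuous but grow like $\varphi\otimes\varphi$. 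This is precisely what \eqref{CondVarphi3} is designed for: because $K_\la/(\varphi\otimes\varphi)$ and $K'_\la/(\varphi\otimes\varphi)$ tend to $0$ at infinity, the a priori $\varphi$-moment bounds make the tails of $K_\la\,\mu^N\otimes\mu^N$ and $K_\la\,\mu^N\otimes\sigma^{\pm,N}$ uniformly small, so the unbounded integrands can be truncated and their limits identified. Carrying this out shows that $\mu^\infty_\cdot$ solves Smoluchowski's equation, whence $\mu^\infty_t=\mu_t$ for all $t$ almost surely by uniqueness of the strong solution; that $\sigma^{+,\infty}_\cdot$ and $\sigma^{-,\infty}_\cdot$ solve the two coupled equations behind \eqref{CoupledSigma}; and, taking $g=h=f$ and subtracting --- which collapses the $K'_\pm$ terms into $\frac12\{f\}K'_\la$ and the $K(\mu,\sigma^\pm)$ terms into $\{f\}K_\la\,\mu\otimes(\sigma^{+,\infty}-\sigma^{-,\infty})$ --- that $\sigma^\infty:=\sigma^{+,\infty}-\sigma^{-,\infty}$ satisfies exactly the sensitivity equation of Theorem \ref{ThmSensitivity2}. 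Finally the bound $(\varphi,\sigma^{+,\infty}_t+\sigma^{-,\infty}_t)\le Ce^{CT}$, inherited in the limit, places $\sigma^\infty_\cdot$ in $(\mcM_1,\|\cdot\|_1)$, so the uniqueness part of that theorem forces $\sigma^\infty_t=\sigma_t$ almost surely.
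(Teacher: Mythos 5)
Your proposal follows essentially the same route as the paper's proof: the same a priori $\varphi$-moment bound obtained from subadditivity of $\varphi$ and $\varphi^2$, the monotonicity remark stemming from \eqref{CondVarphi1}, and a Gr\"onwall inequality; the same tightness strategy from bounded drift, an $O(t/N)$ bracket, and compact containment (you invoke Aldous's criterion where the paper cites Jakubowski's, but these play the same role here); and the same identification step via Skorokhod representation, truncation of the kernel using \eqref{CondVarphi3}, Norris's uniqueness for $\mu$, and the uniqueness part of Theorem \ref{ThmSensitivity2} for $\sigma$. The only imprecision is your remark that the drift is ``bounded uniformly in $N$ and $s\le T$'' --- it is bounded only in mean, through the $\varphi$-moment estimate on $\sigma^{\pm,N}$ --- but this is exactly the form of bound the tightness criteria require, so the argument stands.
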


\begin{proof}
The following estimate is essential in controlling the behaviour of the processes $\sigma^{+,N}$ and $\sigma^{-,N}$.

\smallskip

\begin{lemma}
\label{LemmeControlSigma}
There exists a positive constant $C_1$ \st
$$
\EE\Bigl[\,\underset{0\leq t\leq T}{\sup} \;\bigl(\varphi,\sigma^{+,N}_t+\sigma^{-,N}_t\bigr)\Bigr] \leq C_1.
$$
\end{lemma}

First decompose $\bigl(\varphi,\sigma^{+,N}_t+\sigma^{-,N}_t\bigr)$ as the sum of a martingale $\bigl\{M_t\bigr\}_{0\leq t\leq T}$ and a finite variation term:


\begin{align*}
\lefteqn{\bigl(\varphi,\sigma^{+,N}_t+\sigma^{-,N}_t\bigr) = \bigl(\varphi,\sigma^{+,N}_0+\sigma^{-,N}_0\bigr) + M_t} \nonumber\\
& \quad\quad\quad + \int_0^t \left(\int \bigl\{\varphi(x+x')+\varphi(x)+\varphi(x')\bigr\}\,K'(x,x')\,\widetilde{\mu}^N_s(\textrm{d}x,\textrm{d}x')\right. \nonumber\\
& \quad\quad\quad \left. +\int \bigl\{\varphi(x+y)-\varphi(y)+\varphi(x)\bigr\}\,K(x,y)\,\mu^N_s(\textrm{d}x)\bigl(\sigma^{+,N}_s+\sigma^{-,N}_s\bigr)(\textrm{d}y)\right)\,\textrm{d}s.
\end{align*}

From \eqref{CondVarphi2} we have for each $N\geq 1$ and $t\in [0,T]$
\begin{equation*}
\begin{split}
\bigl(\varphi,\sigma^{+,N}_t+\sigma^{-,N}_t\bigr) \leq C + M_t \,+ &\int_0^t \int 2\bigl\{\varphi(x)+\varphi(x')\bigr\}\varphi(x)\varphi(x')\mu^N_s(\textrm{d}x)\mu_s^N(\textrm{d}x')\,\textrm{d}s \\
+ &\,2 \int_0^t\bigl(\varphi^2,\mu^N_s\bigr)\,\bigl(\varphi,\sigma^{+,N}_s+\sigma^{-,N}_s\bigr)\,\textrm{d}s.
\end{split}
\end{equation*}
This upper bound is simplified using the subadditivity of $\varphi$ and $\varphi^2$ from which we have\footnote{Since $\varphi\geq 1$ we have $\bigl(\varphi,\mu_0^N\bigr)\leq\bigl(\varphi^2,\mu_0^N\bigr)\leq C$.}
$$
(\varphi,\mu^N_t) \leq (\varphi,\mu^N_0) \leq C \quad\textrm{and}\quad (\varphi^2,\mu^N_t) \leq (\varphi^2,\mu^N_0) \leq C.
$$
This gives a Gr\"onwall-type inequality
$$
\bigl(\varphi,\sigma^{+,N}_t+\sigma^{-,N}_t\bigr) \leq C + M_t + 4C^2T + 2C\int_0^t\bigl(\varphi,\sigma^{+,N}_s+\sigma^{-,N}_s\bigr)\,\textrm{d}s
$$
whose mean version gives a constant $C_1$ \st $\EE\Bigl[\bigl(\varphi,\sigma^{+,N}_t+\sigma^{-,N}_t\bigr)\Bigr] \leq C_1$ for any $0\leq t \leq T$. We get the statement of the lemma recalling that hypothesis \eqref{CondVarphi1} implies that the function $t\mapsto \bigl(\varphi,\sigma^{+,N}_t+\sigma^{-,N}_t\bigr)$ is increasing.

\smallskip

Given $\epsilon>0$ define the compact subset
$$
K_\epsilon = \Bigl\{\mu\oplus\sigma^+\oplus\sigma^-\in\mcM^{\oplus 3}\,;\,\max \bigl\{(\varphi,\mu),\,\bigl(\varphi,\sigma^+\bigr),\,\bigl(\varphi,\sigma^-\bigr)\bigr\}\leq \frac{1}{\epsilon}\Bigr\}\subset \mcM^{\oplus 3},
$$
and denote by $\PP^N$ the law of $\Theta^N_\cdot$ on $\mcD\Bigl([0,T],\bigl(\mcM^{\oplus 3},d\bigr)\Bigr)$.

\begin{corollary}[Compactness]
Given $\eta>0$, there exists $\epsilon>0$ \st
$$
\PP^N\Bigl(D\bigl([0,T],K_\epsilon\bigr)\Bigr)\geq 1-\eta.
$$
\end{corollary}

\medskip

Now let $f,g,h$ be bounded measurable functions on $(0,\infty)$ no greater than $1$. By lemma \ref{LemmeControlSigma} we have for all $s<t$
\begin{align*}
\lefteqn{
\EE\left[\int_s^t \bigl\|\bigl(f\oplus g\oplus h, {\bf H}^{(N)}\bigl(\Theta_s^N\bigr)\bigr)\bigr\|\textrm{d}s\right]}\\
& \hspace{0.9cm} \leq 2C^2(t-s) + 2\int_s^t\EE\left[\frac{3C^2}{2}+2C\,\bigl(\varphi,\sigma_r^{+,N}+\sigma_r^{-,N}\bigr)\right]\,\textrm{d}r \\
& \hspace{0.9cm} \leq C_2(t-s)
\end{align*}
and
\begin{align*}
\lefteqn{
\EE\left[\bigl<M^{f,g,h\,;\,N}\bigr>_t - \bigl<M^{f,g,h\,;\,N}\bigr>_s\right]}\\
& \hspace{0.9cm}\leq \frac{1}{N}\EE\left[\int_s^t \bigl\|\bigl(f\oplus g\oplus h, {\bf Q}^{(N)}\bigl(\Theta_r^N\bigr)\bigr)\bigr\|\textrm{d}s\right] \\
& \hspace{0.9cm}\leq \frac{4C^2}{N}+\frac{1}{N}\int_s^t 2 \EE\Bigl[\frac{C^2 + 4C^2}{2}+ 4C\,\bigl(\varphi,\sigma^{+,N}_r\bigr)+C\bigl(\varphi,\sigma^{-,N}_r\bigr)\Bigr]\,\textrm{d}r \\
& \hspace{0.9cm}\leq \frac{C_2}{N}(t-s),
\end{align*}

where $C_2$ is a positive constant depending only on $C$. So, by Doob's $\LL^2$-inequality, we have
\begin{equation}
\label{EstimateVariation}
\EE\left[\underset{s\leq r\leq t}{\sup} \bigl\|\bigl(f\oplus g\oplus h,\Theta_r^N-\Theta_s^N\bigr)\bigr\|^2\right] \leq C_3\Bigl((t-s)^2+\frac{t-s}{N}\Bigr)
\end{equation}
for some positive constant $C_3$ depending only on $C$. It is then a standard fact that the equicontinuity inequality \eqref{EstimateVariation} together with corollary on compactness enable the use of Jakubowski's criterion\footnote{See for instance Dawson's lecture notes \cite{Dawson}.}; so the sequence of laws of $\Theta^N_\cdot$  in $\mcD\Bigl([0,T],\bigl(\mcM^{\oplus 3},d\bigr)\Bigr)$ has a convergent subsequence. Denote by $\Theta^\infty_\cdot = \mu^\infty\oplus\sigma^{+,\infty}\oplus\sigma^{-,\infty}$ any limit point. Taking a subsequence and changing the probability space if necessary we can suppose \wlg that $\Theta^N_\cdot$ converges almost surely to $\Theta^\infty_\cdot$ in $\mcD\Bigl([0,T],\bigl(\mcM^{\oplus 3},d\bigr)\Bigr)$. As $\Theta^N_\cdot$ makes jumps of size at most $\frac{3}{N}$, in the total variation distance, the limit process is a continuous process from $[0,T]$ to $\bigl(\mcM^{\oplus 3},d\bigr)$.

\smallskip

It is proved in \cite{Nor99} that under conditions \eqref{CondConvMu0} and \eqref{ConditionMoment} the process $\mu^\infty_\cdot$ is almost surely equal to the unique strong solution $\mu_\cdot$ of Smoluchowski equation, and that we have almost surely $\underset{s\leq t}{\sup}\;d_0\bigl(\varphi\mu^N_s,\varphi\mu_s\bigr)\rightarrow 0$, as $N$ goes to $\infty$.

\medskip

To prove that $\sigma_\cdot^{+,\infty} - \sigma_\cdot^{-,\infty}$ is equal to the unique solution of equation \eqref{SensitivityEq1} it suffices to prove that it satisfies this equation for any bounded measurable function $g$ with compact support, as a straightforward limit argument will give it for any bounded measurable function. We shall suppose \wlg that $\sigma^{+,N}_0-\sigma^{-,N}_0 = 0$. We shall adopt the notation
$$
\sigma^N_s := \sigma^{+,N}_s-\sigma^{-,N}_s, \quad \big|\sigma^N_s\big| := \sigma^{+,N}_s+\sigma^{-,N}_s
$$
and
$$
\sigma^\infty_s := \sigma^{+,\infty}_s-\sigma^{-,\infty}_s, \quad \big|\sigma^\infty_s\big| := \sigma^{+,\infty}_s+\sigma^{-,\infty}_s.
$$
The conclusion of lemma \ref{LemmeControlSigma} can now be re-written as
$
\EE\Bigl[\,\underset{0\leq t\leq T}{\sup} \;\bigl(\varphi,\big|\sigma^N_t\big|\bigr)\Bigr] \leq C_1$.

\bigskip

It can be seen from expression \eqref{GeneratorTheta} for ${\bf H}^{(N)}$ that the real-valued process
\begin{equation}
\label{FormulaB}
\begin{split}
B_t^{g\,;\,N} = \bigl(g,\sigma^N_t\bigr) &- \int_0^t\left( \int\frac{1}{2}\bigl\{g(x+x')-g(x')-g(x)\bigr\}\,K'(x,x')\,\widetilde{\mu}_s^N(\textrm{d}x,\textrm{d}x')\right. \\
& + \left.\int \bigl\{g(x+y)-g(y)-g(x)\bigr\}\,K(x,y)\,\mu_s^N(\textrm{d}x)\,\sigma^N_s(\textrm{d}y)\right)\textrm{d}s
\end{split}
\end{equation}
is a martingale with previsible increasing process
\begin{equation*}
\begin{split}
\bigl<B^{g\,;\,N}\bigr>_t = &\,\frac{1}{N}\int_0^t \left( \int\frac{1}{2}\bigl\{g(x+x')-g(x')-g(x)\bigr\}^2 \, K'(x,x')\, \widetilde{\mu}_s^N(\textrm{d}x,\textrm{d}x')\right. \\
+ &\left. \int \bigl\{g(x+y)-g(y)-g(x)\bigr\}^2 \, K(x,y)\, \mu_s^N(\textrm{d}x)\, \sigma^N_s(\textrm{d}y)\right)\textrm{d}s
\end{split}
\end{equation*}
Using lemma \ref{LemmeControlSigma} together with the almost sure inequality $\bigl(\varphi,\mu^N_s\bigr)\leq C$, it is seen that $\EE\bigl<B^{g\,;\,N}\bigr>_T$ converges to $0$ as $N$ goes to $\infty$. So, to show that $\sigma^\infty_\cdot$ satisfies equation \eqref{SensitivityEq1}, it is sufficient to prove that the two integrals inside the right hand side of equation \eqref{FormulaB} converge \as to
$$
\int \frac{1}{2}\bigl\{g(x+x')-g(x')-g(x)\bigr\} \, K'(x,x')\,\mu_s(\textrm{d}x)\,\mu_s(\textrm{d}x')
$$
and
\begin{equation}
\label{SecondTerm}
\int \bigl\{g(x+y)-g(y)-g(x)\bigr\} \, K(x,y)\,\mu_s(\textrm{d}x)\,\sigma^\infty_s(\textrm{d}y)
\end{equation}
respectively, and that we have uniform bounds on them so that dominated convergence under the time integral can be used. The convergence of the first integral was proved in \cite{Nor99} using hypotheses \eqref{CondVarphi2} and \eqref{CondVarphi3}, with $K$ in place of $K'$; the same argument applies here. This integral is bounded above by $\frac{3}{2}\|g\|_\infty C^2$, uniformly in $s\in [0,T]$ and $N\geq 1$.

\smallskip

Given $\delta\in (0,\infty]$, the function $\varphi^\delta(x) = \varphi(x){\bf 1}_{x\leq\delta}$ is subadditive. It comes from Fatou's lemma that the inequality
$$
\EE\Bigl[\,\underset{0\leq t\leq T}{\sup} \;\bigl(\varphi^\delta,\big|\sigma^\infty_T\big|\bigr)\Bigr] \leq C_1
$$
holds for any $\delta\in(0,\infty]$. So, to any $\omega\in\Omega$ one can associate a positive constant $m(\delta\,;\,\omega)$ \st we have
$$
\bigl(\varphi^\delta,\big|\sigma^\infty_t(\omega)\big|\bigr) \leq \bigl(\varphi^\delta,\big|\sigma^\infty_T(\omega)\big|\bigr) \leq m(\delta\,;\,\omega)
$$
on the time interval $[0,T]$. One can choose this constant $m(\delta\,;\,\omega)$ so that it converges to $0$ as $\delta$ decreases to $0$. Taking $\omega$ in a subset $\Omega_1$ of $\Omega$ of probability $1$, for which $\Theta^N_\cdot(\omega)$ converges to $\Theta^\infty_\cdot(\omega)$ in $\mcD\bigl([0,T],\bigl(\mcM^{\oplus 3},d\bigr)\bigr)$, we get that
$$
\bigl(\varphi^\delta,\big|\sigma^N_t(\omega)\big|\bigr) \leq \bigl(\varphi^\delta,\big|\sigma^N_T(\omega)\big|\bigr)
$$
is arbitrarily small provided $\delta$ is small enough, and bounded above uniformly in $t\in [0,T], N\geq 1$ and $\delta\in (0,\infty]$.

Proceed now as in \cite{Nor99} and write $K$ as the sum of a kernel $K_1$ with compact support and a kernel $K_2$ with support in
$$
F_1\cup F_2\cup F_3 := \bigl\{(x,y)\,;\,x\leq\delta\bigr\}\cup \bigl\{(x,y)\,;\,y\leq\delta\bigr\}\cup \Bigl\{(x,y)\,;\,\max \{x,y\}\geq \frac{1}{\delta}\Bigr\}.
$$
There is no problem in justifying the convergence of the integral in \eqref{SecondTerm} corresponding to $K_1$. For $K_2$ write, with $\{g\}(x,y) := g(x+y)-g(x)-g(y)$,
\begin{align*}
\lefteqn{
\left| \int\{g\}(x,y)\,K_2(x,y)\,\Bigl(\mu_s^N(\textrm{d}x)\,\sigma^N_s(\textrm{d}y) - \mu_s(\textrm{d}x)\,\sigma^\infty_s(\textrm{d}y)\Bigr)\right|}\\
& \hspace{0.9cm}\leq \left|\int\{g\}(x,y)\,K_2(x,y)\,\bigl(\mu_s^N-\mu^\infty_s\bigr)(\textrm{d}x)\,\sigma^N_s(\textrm{d}y)\right|\\ & \hspace{0.9cm} + \left|\int\{g\}(x,y)\,K_2(x,y)\,\mu_s(\textrm{d}x)\,\bigl(\sigma^N_s-\sigma^\infty_s\bigr)(\textrm{d}y)\Bigr)\right|
\end{align*}
and deal with each term of the right hand side separately. The first term is bounded above by $d_0\bigl(\varphi\mu_s^N(\omega),\varphi\mu_s\bigr)\bigl(\varphi,\big|\sigma_s^N(\omega)\big|\bigr)$, up to a multiplicative constant. As the first factor converges to $0$ (and is no greater than $2C$) while the second is uniformly bounded above, one can apply dominated convergence in the corresponding integral \wrt $s$. To deal with the second term, use the pointwise bounds\footnote{$\|\cdot\|_0$ denotes total variation norm.}
\begin{equation*}
\begin{split}
&\bigl\|K_2{\bf 1}_{F_1}\mu_s\oplus\sigma^N_s(\omega)\bigr\|_0 \leq \gamma_\delta\,C\,\bigl(\varphi,\big|\sigma^N_s\big|(\omega)\bigr), \\
&\bigl\|K_2{\bf 1}_{F_2}\mu_s\oplus\sigma^N_s(\omega)\bigr\|_0 \leq C\,\bigl(\varphi^\delta,\big|\sigma^N_s\big|(\omega)\bigr), \\
& \bigl\|K_2{\bf 1}_{F_3}\mu_s\oplus\sigma^N_s(\omega)\bigr\|_0 \leq \bigl(\varphi^\delta,\mu_s\bigr)\bigl(\varphi,\big|\sigma^N_s\big|(\omega)\bigr), \\
\end{split}
\end{equation*}
where $\gamma_\delta = \max\Bigl\{\frac{K(x,y)}{\varphi(x)\varphi(y)}\,;\,(x,y)\in F_3\Bigr\}$ converges to $0$ as $\delta$ decreases to $0$. As $\bigl(\varphi,\big|\sigma^N_s(\omega)\big|\bigr)$ is uniformly bounded above by a constant, and both $\bigl(\varphi^\delta,\big|\sigma^N_s(\omega)\big|\bigr)$ and $\bigl(\varphi^\delta,\mu_s\bigr)$ can be made arbitrarily small for small enough $\delta$, we have enough control to apply dominated convergence.
\qquad
\end{proof}


\section{Algorithm}
\label{SectionAlgo}

We describe in this section the algorithm used to simulate the particle system studied above; the numerical results are to be found in section \ref{SectionNumericalResults}. Two points of computational interest are first put forward in sections \ref{SectionCoupling} and \ref{SectionMajKernel}; the algorithm itself is described in section \ref{Algo}.

\subsection{Coupling}
\label{SectionCoupling}

The basic algorithm to simulate the sensitivity $\sigma_t$ is given by the dynamics of the process $\Theta^N$ described in section \ref{SectionChainGenerator}. A fresh look at it reveals a potential computational drawback of this approach: It is seen from the explicit expression \eqref{GeneratorTheta} of the generator of $\Theta^N$ that the mean number of particles inside $\sigma^N$ satisfies a Gr\"onwall-type inequality, which implies an exponential growth of this quantity. One should see in this exponential growth of the number of particles a good feature for the approximation qualities of our estimator $\sigma_t^N$ of $\sigma_t$, especially regarding accuracy and variance. This should be opposed to what happens for the weighed and coupled particles systems described in the introduction, for which the number of particles in the system decreases with time\footnote{This decrease is of the same order for the weighted particle system and for Marcus-Lushnikov's dynamics; it is worse for the coupled system. In this approach, $\sigma_t$ is approximated by the ratio $(\mu_t^{\la+\frac{1}{2}\delta\la\,;\,N}-\mu_t^{\la-\frac{1}{2}\delta\la\,;\,N})/ \delta\la$, where $\mu_t^{\la+\frac{1}{2}\delta\la\,;\,N}$ and $\mu_t^{\la-\frac{1}{2}\delta\la\,;\,N}$ are two coupled Markus-Lushnikov processes. So, the smaller $\delta\la$ is, the more $\mu_t^{\la+\frac{1}{2}\delta\la}$ and $\mu_t^{\la-\frac{1}{2}\delta\la}$ (and $\mu_t^{\la+\frac{1}{2}\delta\la\,;\,N}$ and $\mu_t^{\la-\frac{1}{2}\delta\la\,;\,N}$ with it) look the same. This means that the `real' number of particles in the difference $\mu_t^{\la+\frac{1}{2}\delta\la\,;\,N}-\mu_t^{\la-\frac{1}{2}\delta\la\,;\,N}$ is a `function' $f_{\delta\la}(N)\leq N$ of $\delta\la$ that decreases as $\delta\la$ goes to $0$, a necessary condition for the ratio to be a good estimate of $\sigma_t$.}.

\smallskip

As an exponential growth of the quantity of information to consider is non-desirable for simulations, three kinds of tricks are used in order to reduce it. 
\begin{romannum}
   \item \textbf{Cancellation.} As we are only interested in the difference $\sigma_t^{+,N} - \sigma_t^{-,N}$ any particle which appears in both particle systems will be removed from both of them.
   \item \textbf{Coupling.} A particle $\delta_x$ of $\mu^N$ coagulates with any particle of $\sigma_t^{+,N}$ at rate $\frac{1}{N}K\bigl(x,\sigma_t^{+,N}\bigr) = \frac{1}{N}\int K(x,y)\sigma_t^{+,N}(\textrm{d}y)$; it also coagulates with any particle of $\sigma_t^{-,N}$ at rate $\frac{1}{N}K\bigl(x,\sigma_t^{-,N}\bigr)$. This particle is thus used in both systems at rate $\frac{1}{N}K\bigl(x,\sigma_t^{+,N}\bigr)\wedge K\bigl(x,\sigma_t^{-,N}\bigr)$, in which case a cancellation removes the particles $\delta_x$ added to $\sigma_t^{-,N}$ and $\sigma_t^{+,N}$. This operation leaves the total number of particles in $\sigma^N$ constant. The rest of the time $\delta_x$ is used in only one of the systems.
   \item \textbf{Re-sampling.} A more drastic control of the number of particles in $\sigma^N$ can be obtained using re-sampling. Let $M$ and $m$ be two integers depending on $N$, with $m\leq M$. Each time $\sigma_t^{+,N}$ or $\sigma_t^{-,N}$ has $M$ particles, replace it by an iid sample of itself of size $m$; this way the total number of particles in $\sigma^N$ remains no greater than $2M$.
\end{romannum}

\subsection{Majorant kernel}
\label{SectionMajKernel}

In order to treat information in a computationally efficient way, we have organized the data using tree structures. The use of a majorant kernel with a simple algebraic structure together with an acceptance/rejection step lead to an efficient updating of the data tree.

The choice of a majorant kernel $\Khat(\cdot,\cdot)$ is made so that $\Khat$ is symmetric, no less than $K$ and has the form
\begin{equation}
\label{factorisation_eq}
\Khat(x_i,x_j) = \sum_\beta\Khat_\beta(x_i,x_j) := \sum_\beta f_\beta(x_i) \, g_\beta(x_j)
\end{equation}
for $\beta$ in a finite set of indices \cite{Wagner3}. This form of kernel leads to simple generation of probabilities of the form
\begin{equation}
\label{PbaSampling}
\frac{\Khat(x_i,x_j)}{\sum_{a\neq b} \Khat(x_a,x_b)} = \sum_\beta \frac{\sum_{a\neq b} f_\beta(x_a) \, g_\beta(x_b)}{\sum_{a\neq b} \sum_{\beta'} f_{\beta'}(x_a) \, g_{\beta'}(x_b)}\; \frac{f_\beta(x_i)}{\sum_a f_\beta(x_a)}\;\frac{g_{\beta}(x_j)}{\sum_{b\,;\, b\neq a} g_{\beta}(x_b)},
\end{equation}
where $a$ and $b$ run in possibly different finite sets of indices. Identity \eqref{PbaSampling} corresponds to choosing first an index $\beta$ according to the probability specified by the first term of the \rhs and then choosing each particle $x_i, x_j$ \textit{separately}. The choice of a pair $(x_i,x_j)$ according to the probability given the \lhs of formula \eqref{PbaSampling} can thus be done in $O(N)$ operations rather than $O(N^2)$. All the required information can be held in binary tree structures (as described in \cite{LPDA}) whilst allowing an even further reduction in the number of operations to choose each particle from $O(N)$ to $O(\log N)$. Updating this information also requires $O(\log N)$ operations. Further, the sums in the first fractions of the \rhs of \eqref{PbaSampling} are automatically contained in the tree structure without further computation.

Note that in the theoretical framework used in section \ref{SectionTheoretic}, the function $\varphi(x)\,\varphi(y)$ can be used as a unique majorant kernel. We have yet chosen to present the above general procedure as we shall consider situations in which the above theory does not apply directly.

\subsection{Algorithm description}
\label{Algo}

Recall $\Theta^N_t$ is of the form $\Bigl(\frac{1}{N}X_t,\frac{1}{N}Y_t,\frac{1}{N}Z_t\Bigr)$ for a Markov process $\Theta_t = \bigl(X_t,Y_t,Z_t\bigr)$ whose components are sums of Dirac masses and whose dynamics was described in section \ref{SectionChainGenerator}. What the algorithm really simulates is the discrete measure-valued process $\Theta_t$; a rescaling gives the time evolution of $\Theta^N_t$. The algorithm is described in Algorithms \ref{AlgoPart1} and \ref{AlgoPart2} below.

\medskip

\noindent Note that there may be up to three different majorant kernels --- for $K$, $K_{+}^{'}$ and $K_{-}^{'}$. Therefore, we slice up the total majorant rates according to the event type $\al \in \{ 0, 1^+, 1^-, 2^+, 2^-\}$ to occur. We then have $\widehat{K}_{\alpha \beta}$ such that $\sum_{\beta} \widehat{K}_{\alpha \beta} = \widehat{K}_{\alpha}$ (from \eq \ref{factorisation_eq}), where $\widehat{K}_{\alpha} \in \{\widehat{K}, \widehat{K}_{+}^{'}, \widehat{K}_{-}^{'}\}$. This gives the corresponding rates $\widehat{\rho}_{\alpha \beta}$ and $\widehat{\rho}_{\alpha}$.


\incmargin{2em}
\restylealgo{algoruled} 
\Setvlineskip{0.4cm}
\dontprintsemicolon
\begin{algorithm}[!hb]
\SetVline

\nl Set $t=0$.
\While{$t < t_{\textrm{end}}$}
{
    \nl Generate a realisation of the holding time $\Delta t$ with exponential law of parameter $\frac{1}{N}\sum_{\alpha}\rhohat_{\alpha}$, and set $t \leftarrow t + \Delta t$.\;

    \nl Choose event type $\alpha \in \{ 0, 1^+, 1^-, 2^+, 2^-\}$ to occur with distribution $\frac{\rhohat_{\alpha}}{\sum_{\alpha}\rhohat_{\alpha}}$. \;

    \nl Choose process $\beta$ with distribution $\frac{\rhohat_{\alpha\beta}}{\rhohat_{\alpha}}$.\;

    \nl Given $\alpha$ and $\beta$, choose a pair of particles using the \textbf{index distribution}
    \begin{equation} \frac{\Khat_{\alpha\beta}(x_i,x_j)}{\rhohat_{\alpha\beta}} = \frac{f_{\al\beta}(x_i)}{\sum_af_{\al\beta}(x_a)} \frac{g_{\al\beta}(x_j)}{\sum_bg_{\al\beta}(x_b)}
    \end{equation}
    where $(x_i,x_j)$ are the masses of particles sampled from the appropriate ensembles ($\mu^N$, $\sigma^{+,N}$ or $\sigma^{-,N}$) depending on $\al$.\;

    \nl Perform the coagulation step which depends on $\alpha$:

    \Switch{the value of $\alpha$ chosen}
    {
        \nl \Case{$\alpha = 0$; this part is the original Marcus-Lushnikov process.}
        {
             The chosen pair of particles is of the form $(x_i,x_j)$.\;

            \nl With probability $\frac{K_{\alpha}}{\Khat_{\alpha}}$ make the jump $\Delta\Theta^N = \bigl(\delta_{x_i+x_j}-\delta_{x_i}- \delta_{x_j}\bigr)\oplus 0\oplus 0$.\;
        }
        \lnl{case_1} \Case{$\alpha=1^+\textrm{\emph{or }} 1^-$}
        {
            The chosen pair of particles is of the form $(x_i,x_j)$.
            \nl Set $p =  \frac{\max\{K_{2^+},K_{2^-}\}}{\Khat_{2^+}+\Khat_{2^-}}$, and generate a realisation of a uniform random variable $\textbf{U} $ in $(0,1)$.\;

            \uIf{$\textbf{U} \leq p$}
            {
                \uIf{$K_{2^+} > K_{2^-}$}
                {
                    \nl make the jump $\Delta\Theta^N = 0\oplus\delta_{x_i+x_j}\oplus \delta_{x_i}+\delta_{x_j}$.\;
                }
                \Else
                {
                    \nl make the jump  $\Delta\Theta^N = 0\oplus\delta_{x_i}+\delta_{x_j}\oplus\delta_{x_i+x_j}$.\;
                }
            }
            \lElse
            {
                \nl Go to Step \ref{cancellation_step}.\;
            }
        }
        \lnl{case_2} \lCase{$\alpha=2^+ \textrm{\emph{or }}2^-$;}
 {
            Go to Algorithm \ref{AlgoPart2}.\;
        }
    }

    \lnl{cancellation_step} For \textbf{each} particle of $\sigma^N$ that has just been involved in a coagulation or newly formed, do a cancellation operation if it can be done.\;
}
\nl STOP.\;
\caption{The ExactCoupling algorithm - Part 1}
\label{AlgoPart1}
\end{algorithm}

\incmargin{2em}
\restylealgo{algoruled}
\Setvlineskip{0.4cm}
\dontprintsemicolon
\begin{algorithm}
\SetVline

\lnl{case_2_2ndpart} \Case{$\alpha=2^+$ or $\alpha=2^-$}
        { The chosen ordered pair of particles contains one particle of $\mu^N$ and one particle of $\sigma^N$, in either order.\;

            \uIf{the pair is of the form $(x_i,\cdot)$ where $x_i$ is the mass of a particle from $\mu^N$}
            {
                \uIf{the second particle belongs to $\sigma^{+,N}$}
                {
                    \nl Choose a particle of $\sigma^{-,N}$ according to the distribution
                    \begin{equation}
                    \frac{g_{\al\beta}(\cdot)}{\sum_{\ell \in \llbracket 1,\ldots,q\rrbracket}g_{\al\beta}(z_\ell)}.
                    \end{equation}
                }
                \Else
                {Choose a particle of $\sigma^{+,N}$ according to the distribution
                    \begin{equation}
                    \frac{g_{\al\beta}(\cdot)}{\sum_{k \in \llbracket1,\ldots,p\rrbracket}g_{\al\beta}(y_k)}.
                    \end{equation}
                }
                \nl Set
                \begin{equation}
                r_+ := \displaystyle{\sum_{k \in \llbracket 1,\ldots,p\rrbracket}g_{\al\beta}(y_k)} \quad, \quad r_- := \displaystyle{\sum_{\ell \in \llbracket 1,\ldots,q\rrbracket}g_{\al\beta}(z_\ell)}
                \end{equation}.\;

            }
            \Else
            {
                \nl Do the symmetrical operation, swapping $g_{\al\beta}$ with $f_{\al\beta}$.\;
            }

            \nl The preceding steps produce a triple $(x_i,y_k,z_\ell)$ of particles from $\mu^N \oplus \sigma^{+,N} \oplus \sigma^{-,N}$. Set
            \begin{equation}
            p_{\min} = \frac{\min\{r_+,r_-\}}{r_+ + r_-} \frac{K}{\Khat} \quad, \quad
            p_{\max} = \frac{\max\{r_+,r_-\}}{r_+ + r_-} \frac{K}{\Khat}.
            \end{equation}\;

            \nl Generate realisation of a uniform random variable $\textbf{U}$ in $(0,1)$.\;

            \uIf{$0 < \textbf{U} \leq p_{\min}$}
            {
                \nl make the jump $\Delta\Theta^N = 0\oplus\bigl(\delta_{x_i+y_k}-\delta_{y_k}\bigr)\oplus\bigl(\delta_{x_i+z_\ell}-\delta_{z_\ell}\bigr)$.\;
            }
            \uElseIf{$p_{\min} < \textbf{U} \leq p_{\max}$}
            {
                \uIf{$r_+ > r_-$}
                {
                    \nl make the jump $\Delta\Theta^N = 0\oplus\bigl(\delta_{x_i+y_k}-\delta_{y_k}\bigr)\oplus\delta_{x_i}$.\;
                }
                \Else
                {
                    \nl make the jump $\Delta\Theta^N = 0\oplus\delta_{x_i}\oplus\bigl(\delta_{x_i+z_\ell}-\delta_{z_\ell}\bigr)$.\;
                }
            }
            \Else
            {
                \nl Go to Step \ref{cancellation_step} of Algorithm \ref{AlgoPart1}.\;
            }
        }
        \nl Go to Step \ref{cancellation_step} of Algorithm \ref{AlgoPart1}.\;
\caption{The ExactCoupling algorithm - Part 2 (Cases $\alpha=2^+,2^-$ only)}
\label{AlgoPart2}
\end{algorithm}

\medskip

\noindent To order to describe numerical results it provides, we shall denote by $L$ the number of simulations with the same initial conditions and by $t_{\textrm{run}}$ the computational time taken to run the algorithm (CPU time in seconds).

\section{Numerical Results}
\label{SectionNumericalResults}

We have chosen to illustrate our approach in situations where the theoretical results of section \ref{SectionTheoretic} do not apply, so as to show its robustness. The main motivation of this article is to produce a stochastic estimate of the sensitivity $\sigma_t$ whose variance is smaller than that given by existing methods. One step in this direction was done in \cite{PeterJamesMarkus}, where $\sigma_t$ was approximated by the ratio $(\mu_t^{\la+\frac{1}{2}\delta\la\,;\,N} - \mu_t^{\la-\frac{1}{2}\delta\la\,;\,N})/\delta\la$, for two Marcus-Lushnikov processes with slightly different parameters. The method there called for coupling them so as to reduce the variance of this estimator as much as can be done; this was done in the same spirit as the coupling used above. We shall refer to this algorithm as the \textbf{CD} algorithm (for central difference). The variance reduction obtained by this method is significant; we shall thus compare our results with those given by the CD algorithm. As our algorithm simulates $\sigma_t$ directly, it will be called \textbf{Exact}; and depending on whether or not we use the coupling step we shall talk of the \textbf{ExactCoupling} or \textbf{ExactIndep} algorithm.


The data presented deal with the additive kernel $K(x,y) = \la (x+y)$ and a kernel that is used in modelling soot formation in a free molecular regime \cite{GoodKraFict,Wagner,PattersonSinghBalthasar}, thus we shall call it the `Soot Kernel':
\begin{equation*}
K(x,y) = \left(\frac{1}{x}+\frac{1}{y}\right)^{\frac{1}{2}} \left(x^{\frac{1}{\lambda}} + y^{\frac{1}{\lambda}}\right)^2;
\end{equation*}
both are considered in the \textit{discrete setting where masses are integers}. The reference value of $\la$ for the additive kernel will be $1$ and for the soot kernel $2.1$. We shall always take as initial condition for the Marcus-Lushnikov process $N$ particles with mass equal to $1$, and $\sigma^{+,N}_0 = \sigma^{+,N}_0 = 0$.\\


Smoluchowski equation has an explicit analytic solution for an additive interaction (see the review by Aldous \cite{AldousRev} for instance) we can compare our results with it; it will be convenient to write $\sigma^\infty_t$ for $\sigma_t$ in this case. No analytic solution of Smoluchowski equation or its sensitivity equation is available for the soot kernel; we shall thus compare our estimators $\sigma^N_t$ with what the ExactCoupling algorithm gives us for very high settings, say $N=3\times 10^6$ and $L=10^3$ simulations. Given any $N$, the $l^{\textrm{th}}$ run of the algorithm produces an estimator of $\sigma_t$ which we shall denote by $\sigma_t^{l,N}$. We shall set $\sigma^\infty_t := 10^{-3}\displaystyle{\sum_{l=1,\ldots,10^3}\sigma_t^{l,10^6}}$. Figures \ref{Additive:partsizeplot} and \ref{Soot:partsizeplot} show the empirical estimate of $\sigma_t$ given after $L$ runs, at different times. The line represents $\sigma_t^\infty$. For comparison, the results given by the CD algorithm for the same setting, with $\delta\la = 0.05$, are plotted using stars. Also, Figure \ref{solution_plots} shows what the solution to the original Smoluchowski equation looks like.

\begin{figure}[t]
\begin{center}
    \subfigure[!h][ExactCoupling, $t=0.5$]
    {
        \resizebox{!}{0.27\linewidth}{\includegraphics{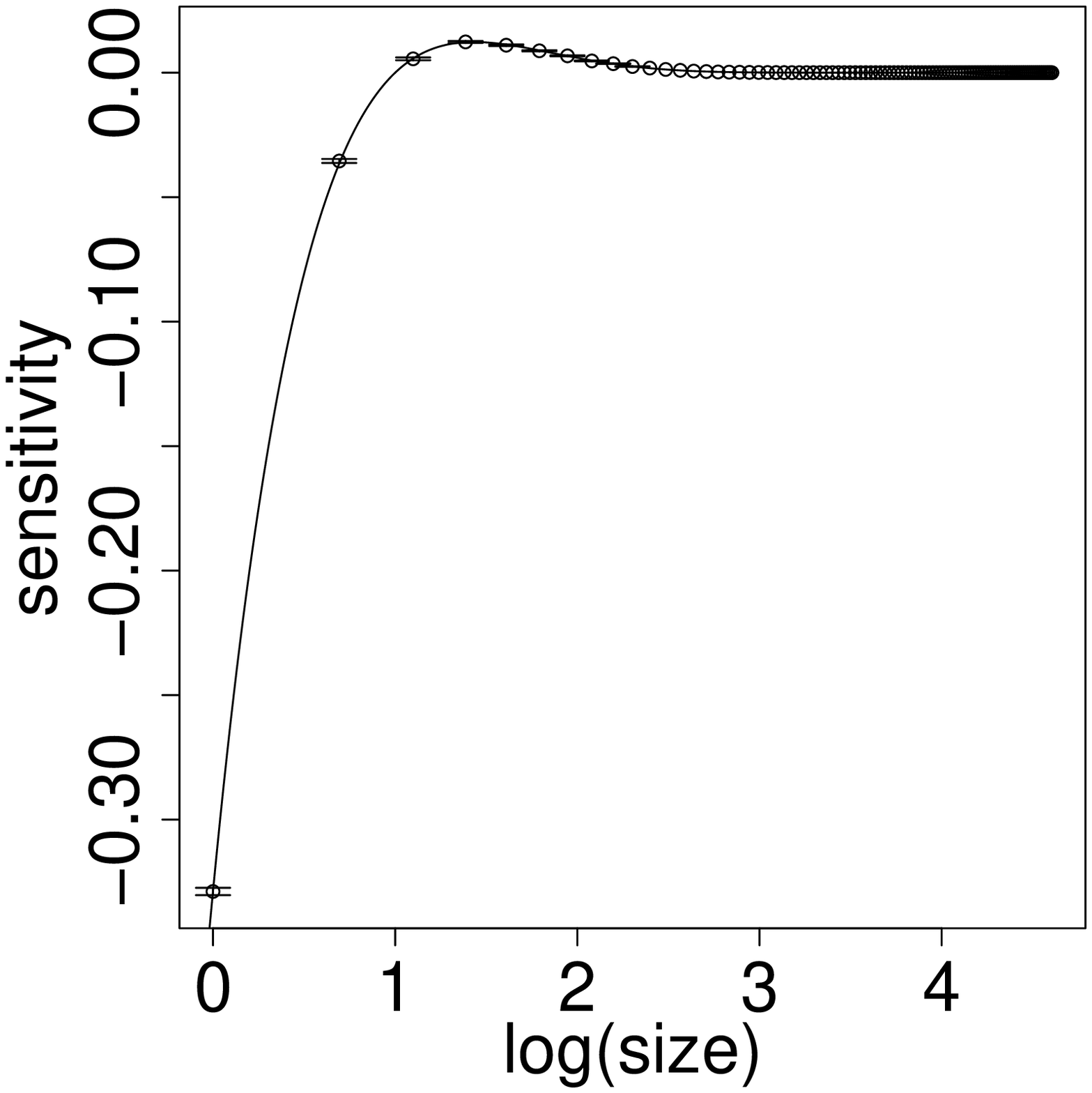}}
    }\hspace{0.5cm}
    \subfigure[!h][CD ($\delta\lambda=0.05$), $t=0.5$]
    {
        \resizebox{!}{0.27\linewidth}{\includegraphics{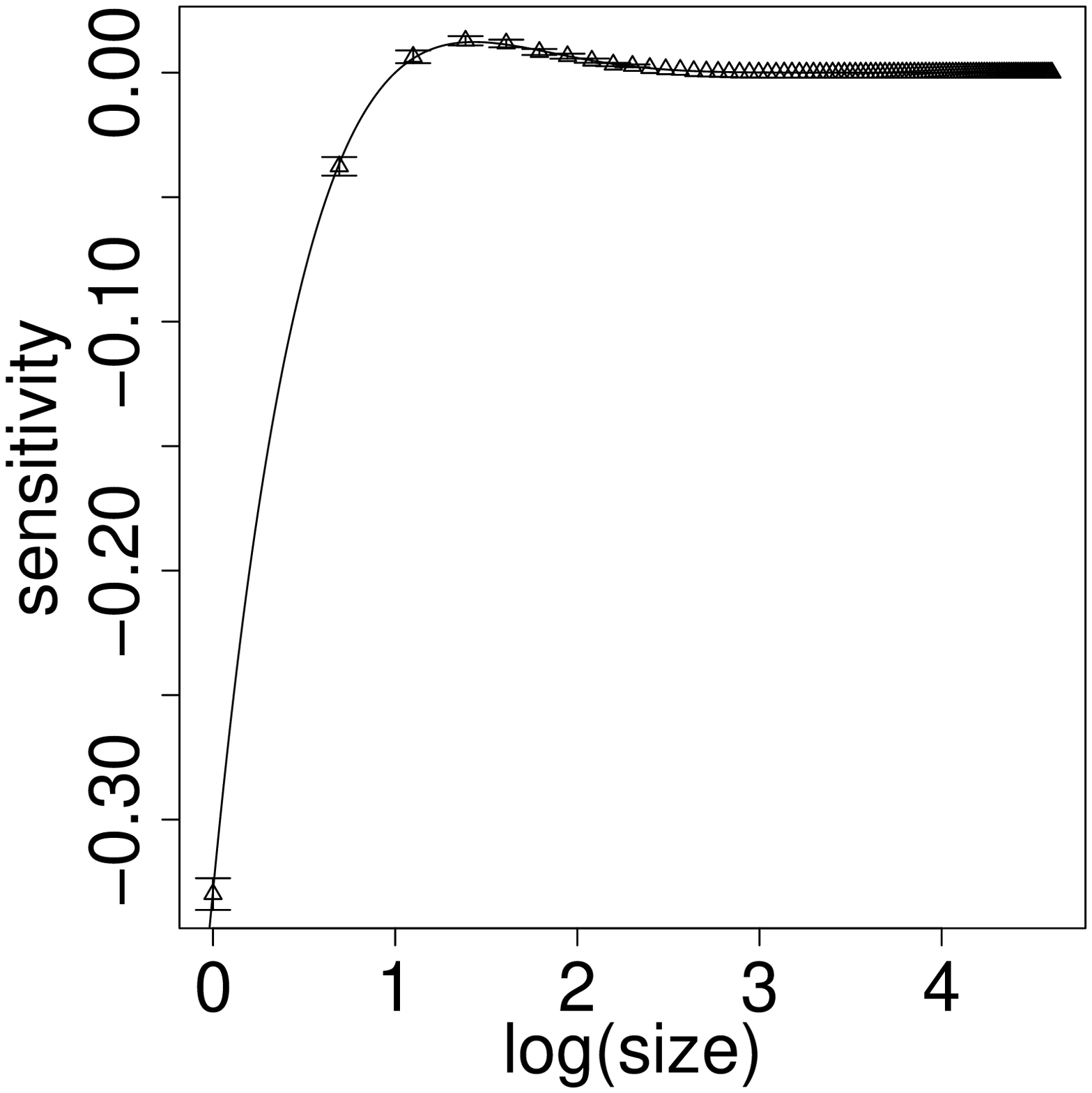}}
    }\\
    \subfigure[!h][ExactCoupling, $t=3.0$]
    {
        \resizebox{!}{0.27\linewidth}{\includegraphics{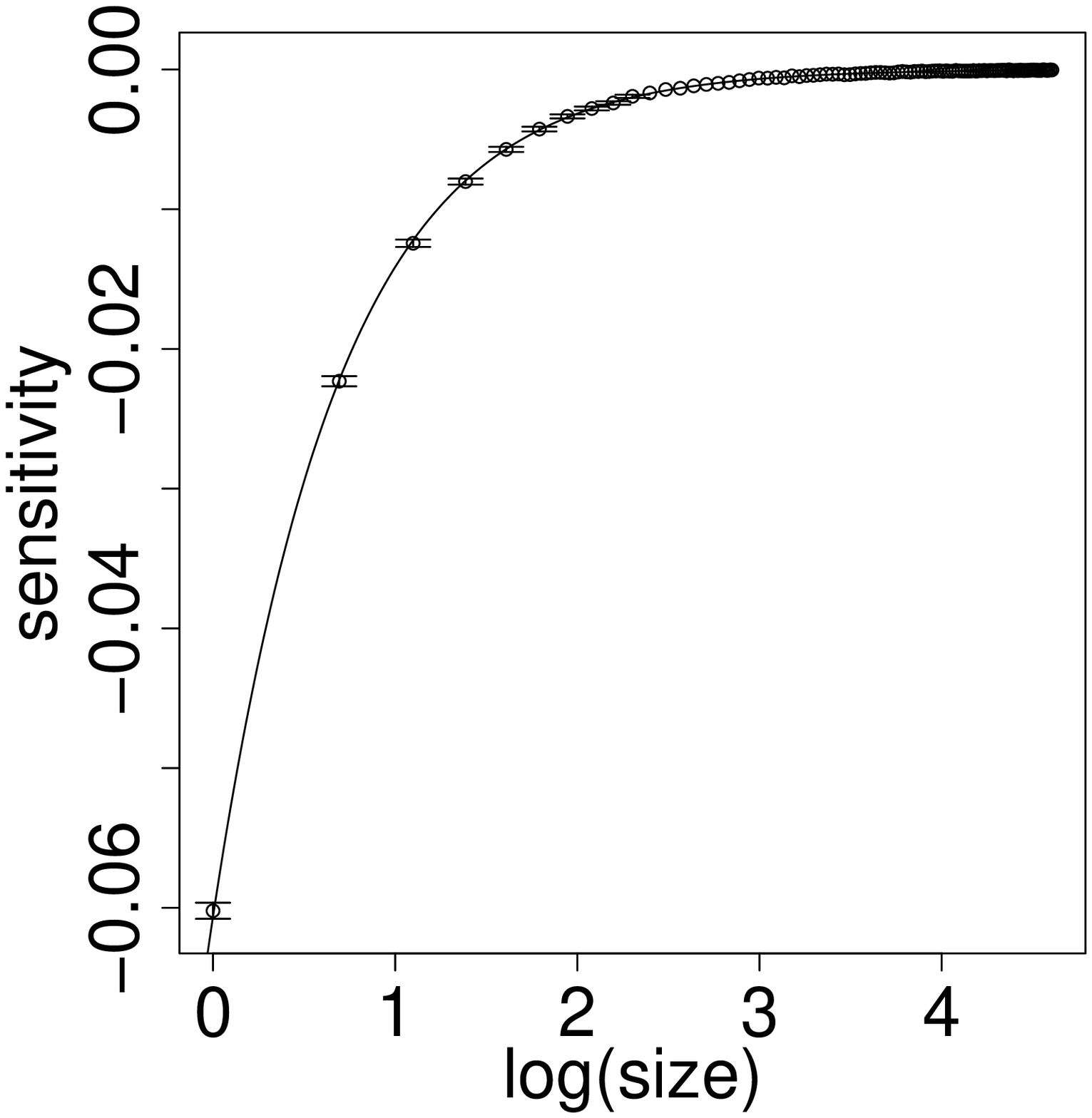}}
    }\hspace{0.5cm}
    \subfigure[!h][CD ($\delta\lambda=0.05$), $t=3.0$]
    {
        \resizebox{!}{0.27\linewidth}{\includegraphics{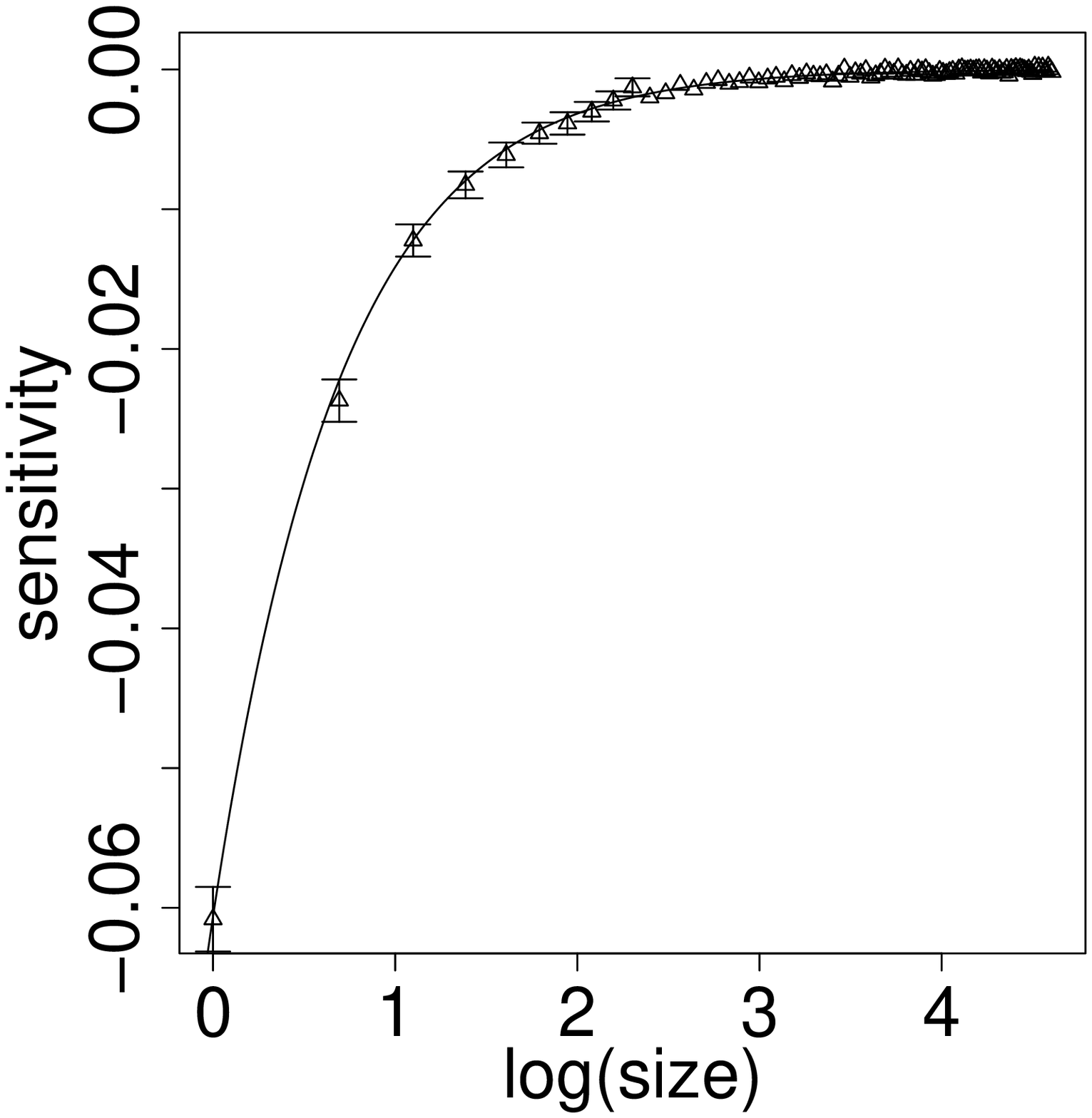}}
    }\\
\end{center}
\caption{Sensitivity for additive kernel, $\lambda=1.0$, $N=10^3$, $L=1000$. The confidence intervals for the larger particle sizes are omitted for pictorial clarity.}
\label{Additive:partsizeplot}
\end{figure}
\begin{figure}[t]
\begin{center}
    \subfigure[!h][ExactCoupling, $t=0.5$]
    {
        \resizebox{!}{0.27\linewidth}{\includegraphics{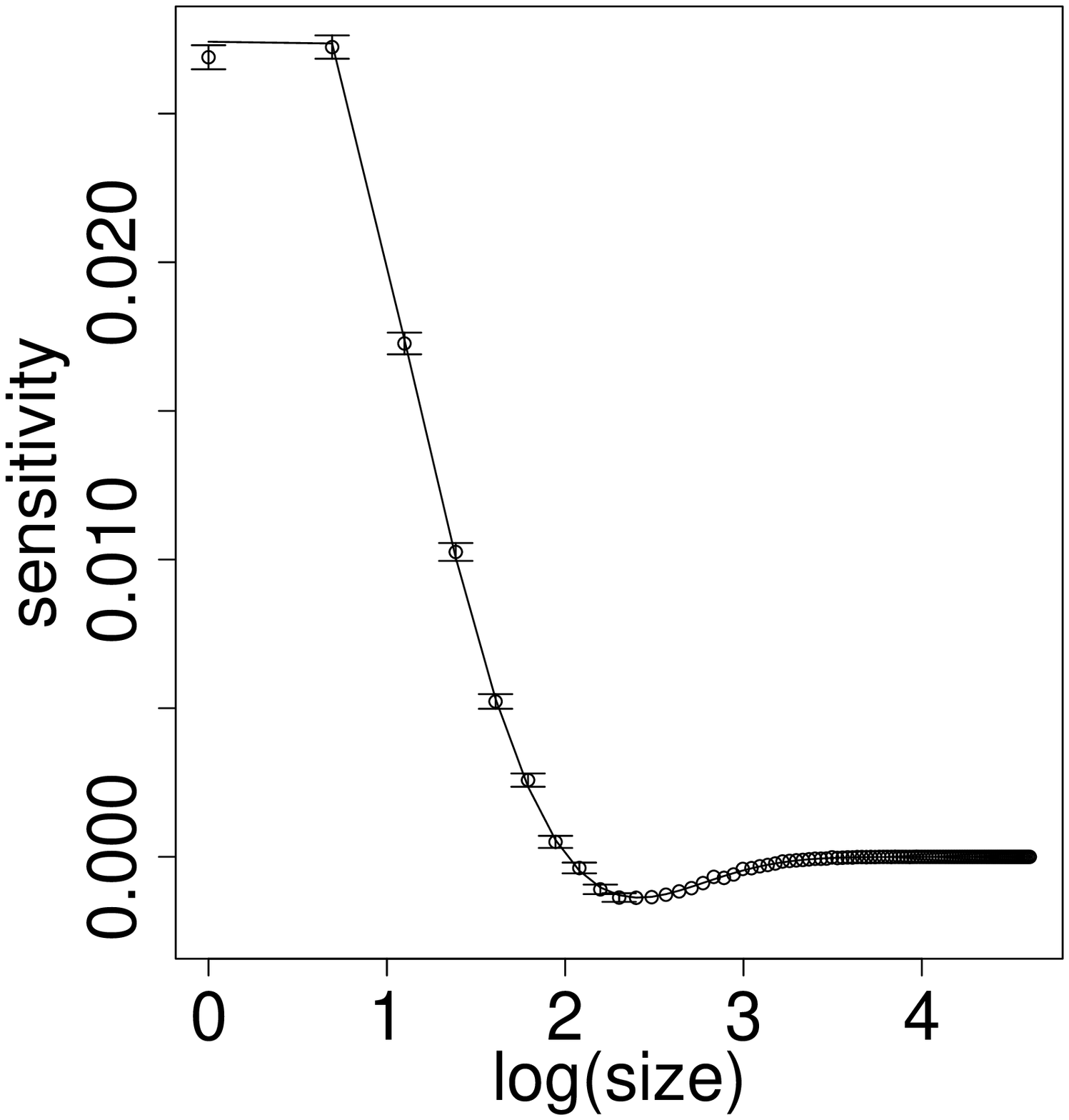}}
    }\hspace{0.5cm}
    \subfigure[!h][CD ($\delta\lambda=0.05$), $t=0.5$]
    {
        \resizebox{!}{0.27\linewidth}{\includegraphics{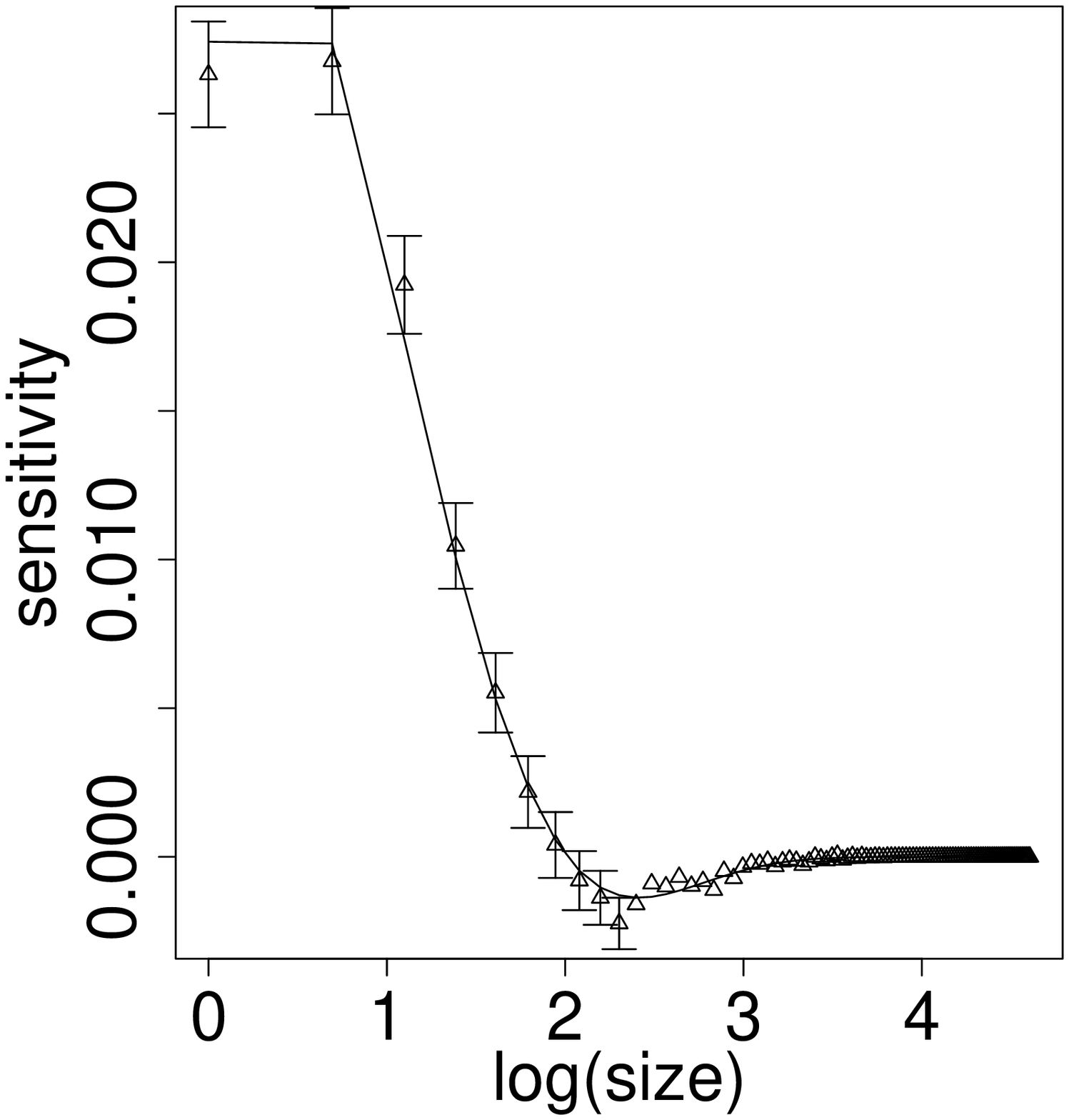}}
    }\\
    \subfigure[!h][ExactCoupling, $t=3.0$]
    {
        \resizebox{!}{0.27\linewidth}{\includegraphics{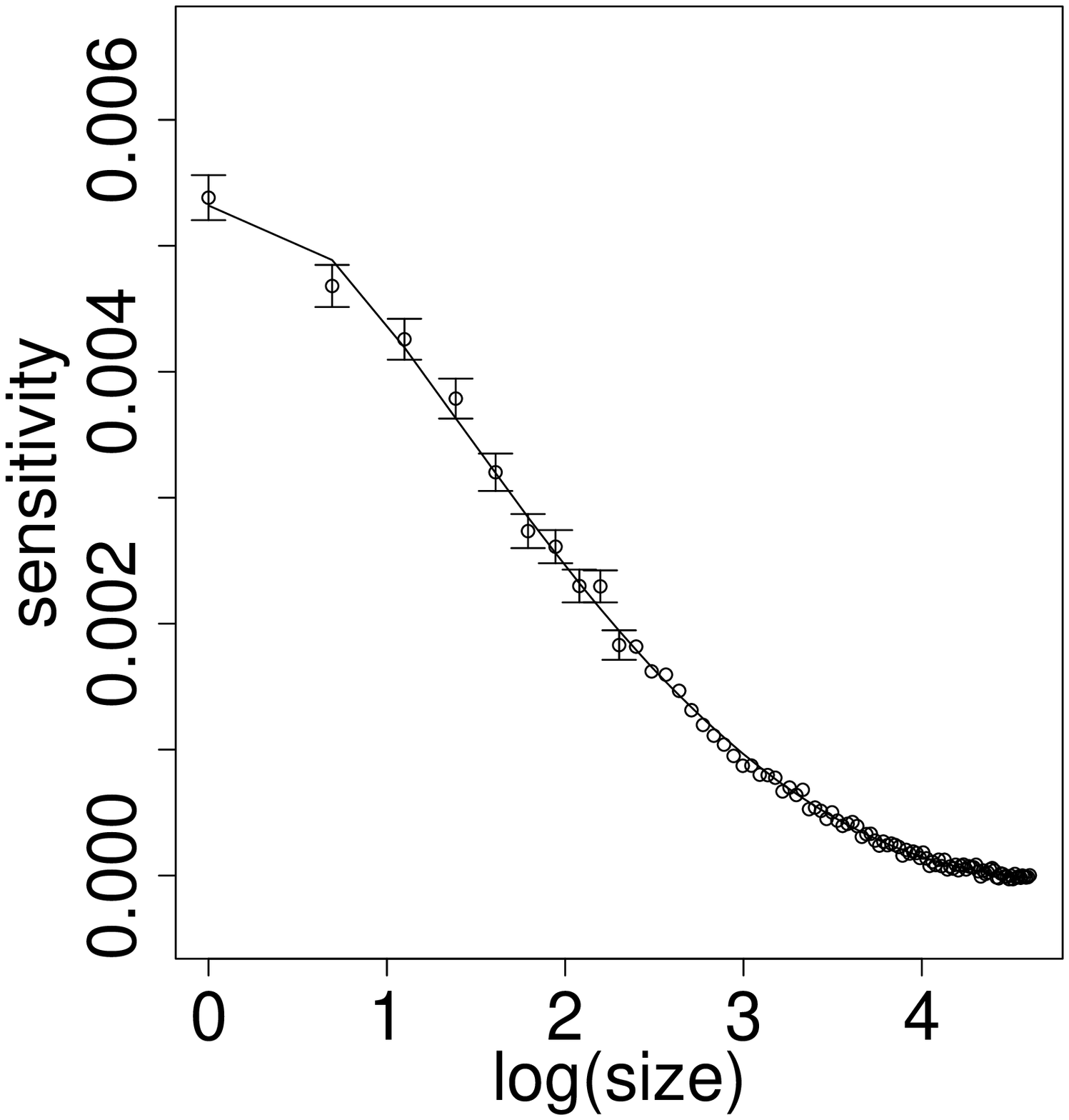}}
    }\hspace{0.5cm}
    \subfigure[!h][CD ($\delta\lambda=0.05$), $t=3.0$]
    {
        \resizebox{!}{0.27\linewidth}{\includegraphics{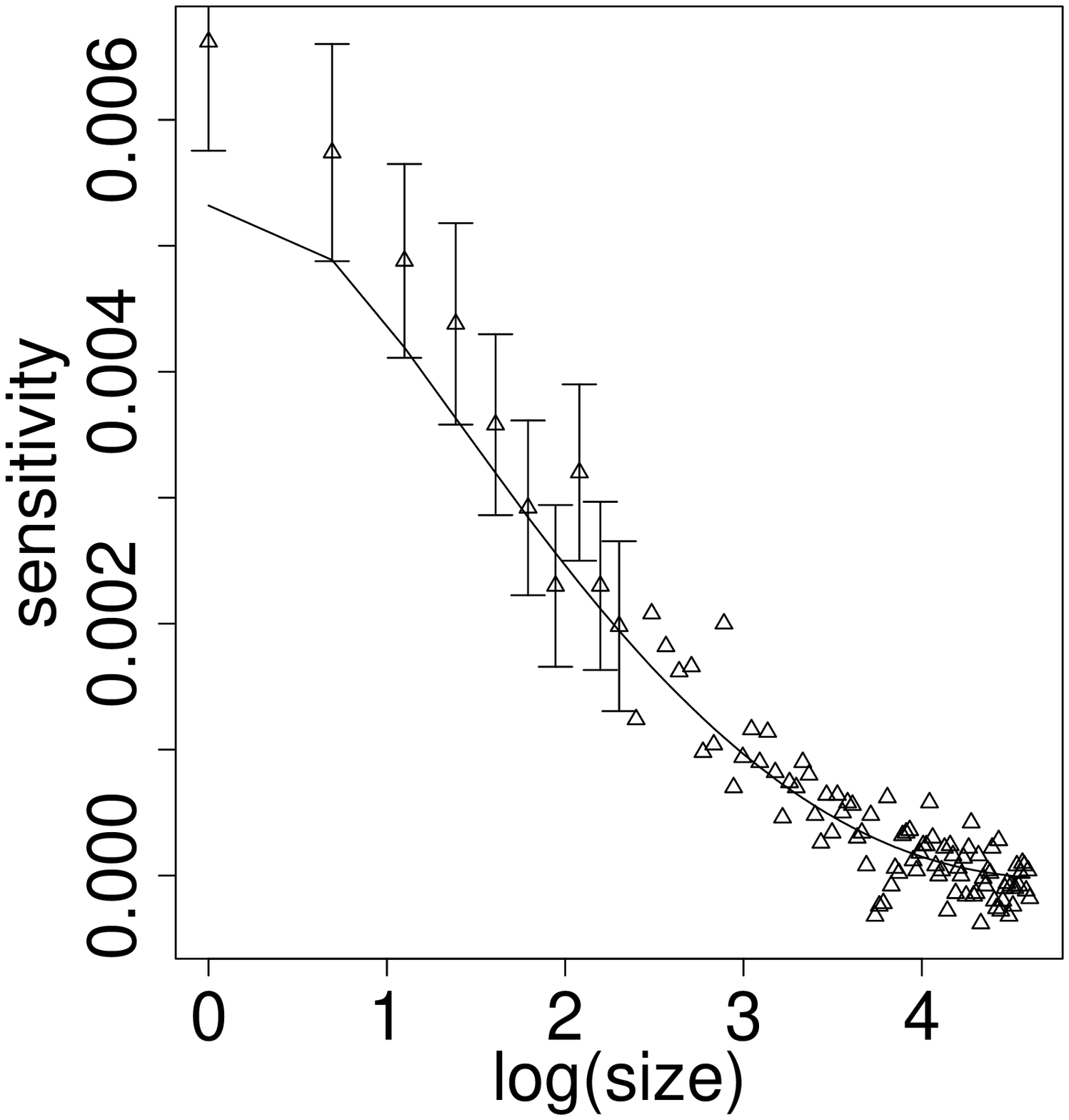}}
    }
 \end{center}
\caption{Sensitivity for soot kernel, $\lambda=2.1$, $N=10^3$, $L=1000$. The confidence intervals for the larger particle sizes are omitted for pictorial clarity.}
\label{Soot:partsizeplot}
\end{figure}

\begin{figure}[!h]
\begin{center}
    \subfigure[h][Additive kernel, $t=1.0$]
    {
        \resizebox{!}{0.30\linewidth}{\includegraphics{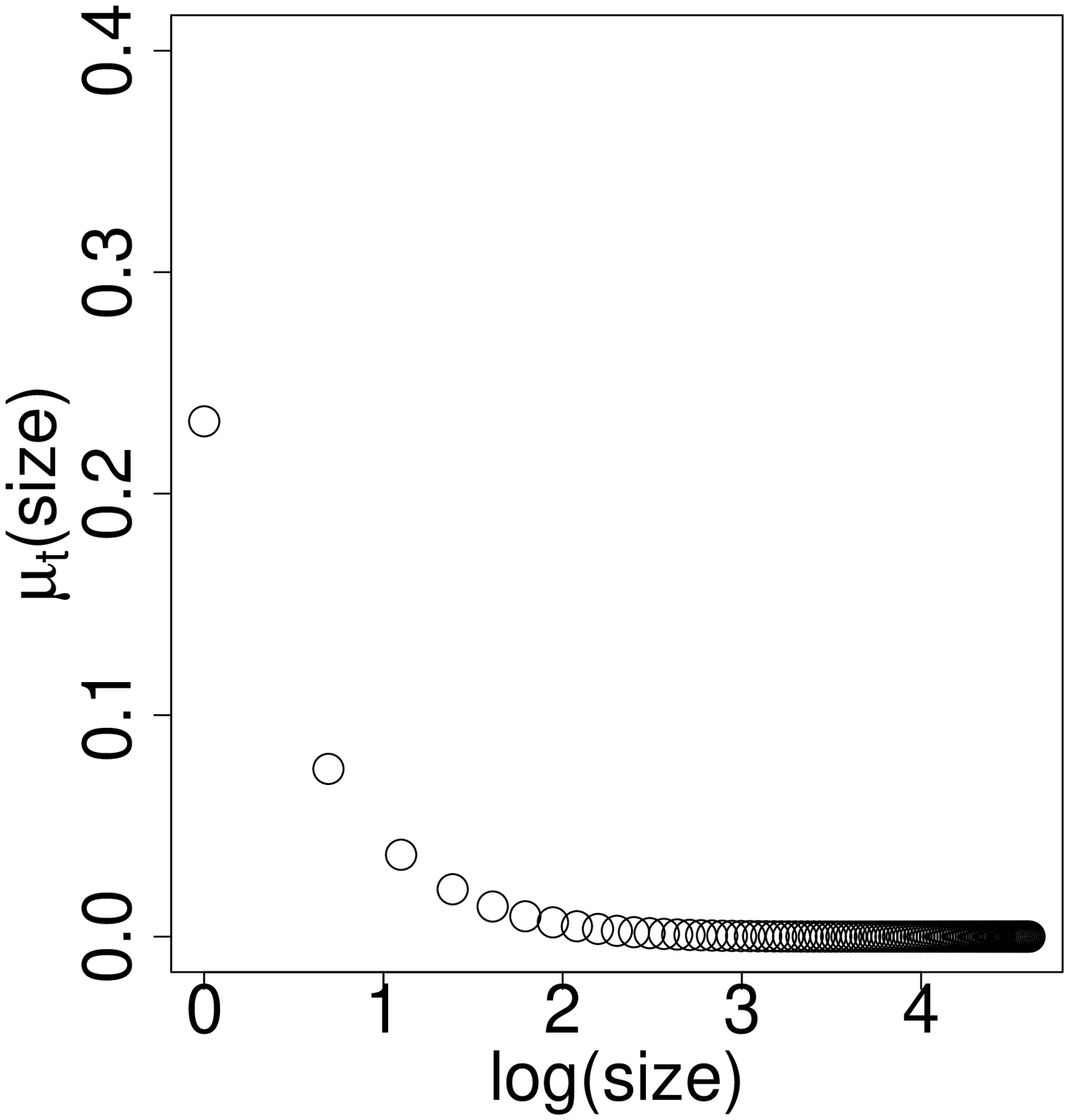}}
    }
    \subfigure[h][Additive kernel, $t=3.0$]
    {
        \resizebox{!}{0.30\linewidth}{\includegraphics{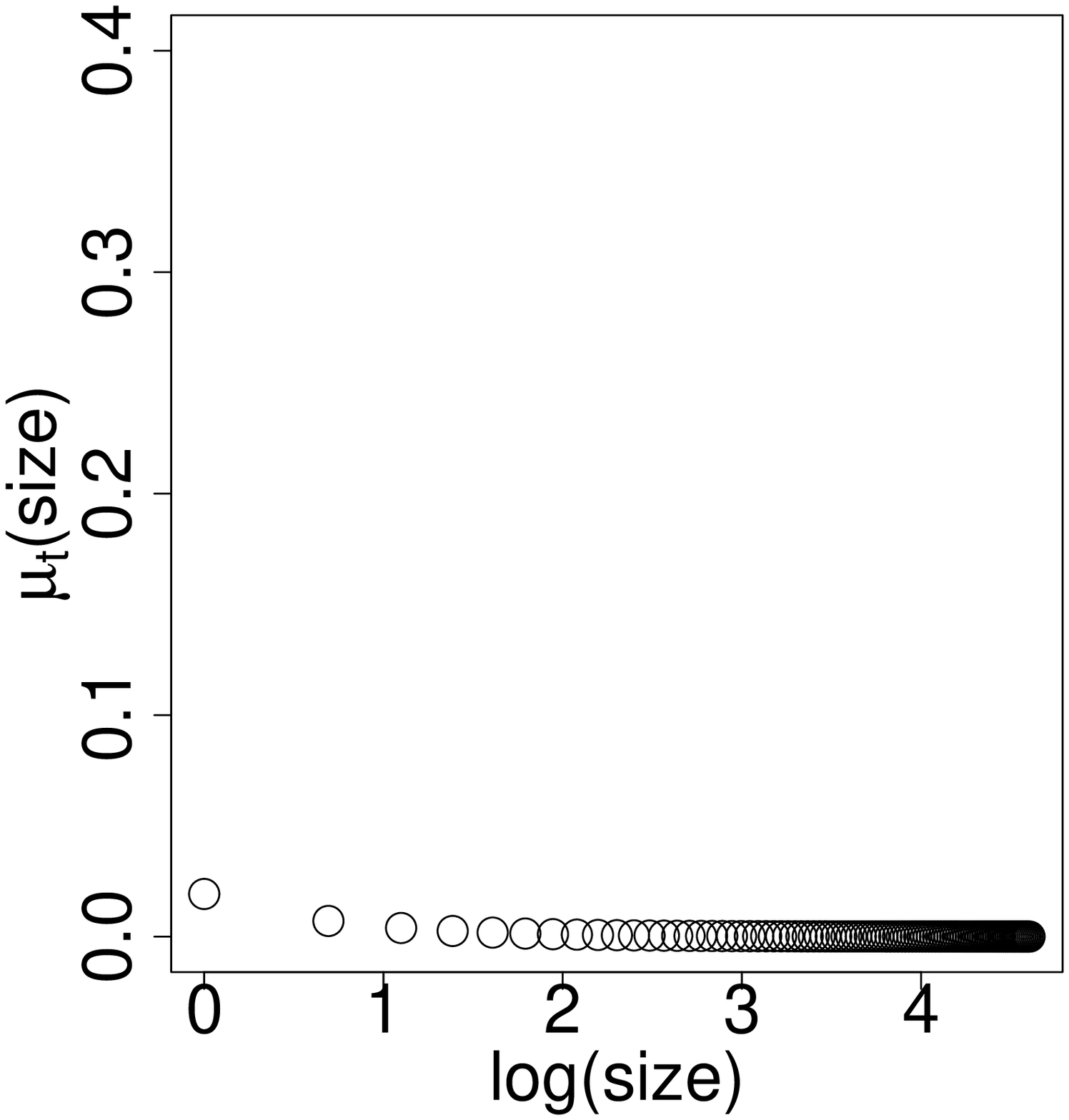}}
    }\\
    \subfigure[h][Soot kernel, $t=1.0$]
    {
        \resizebox{!}{0.30\linewidth}{\includegraphics{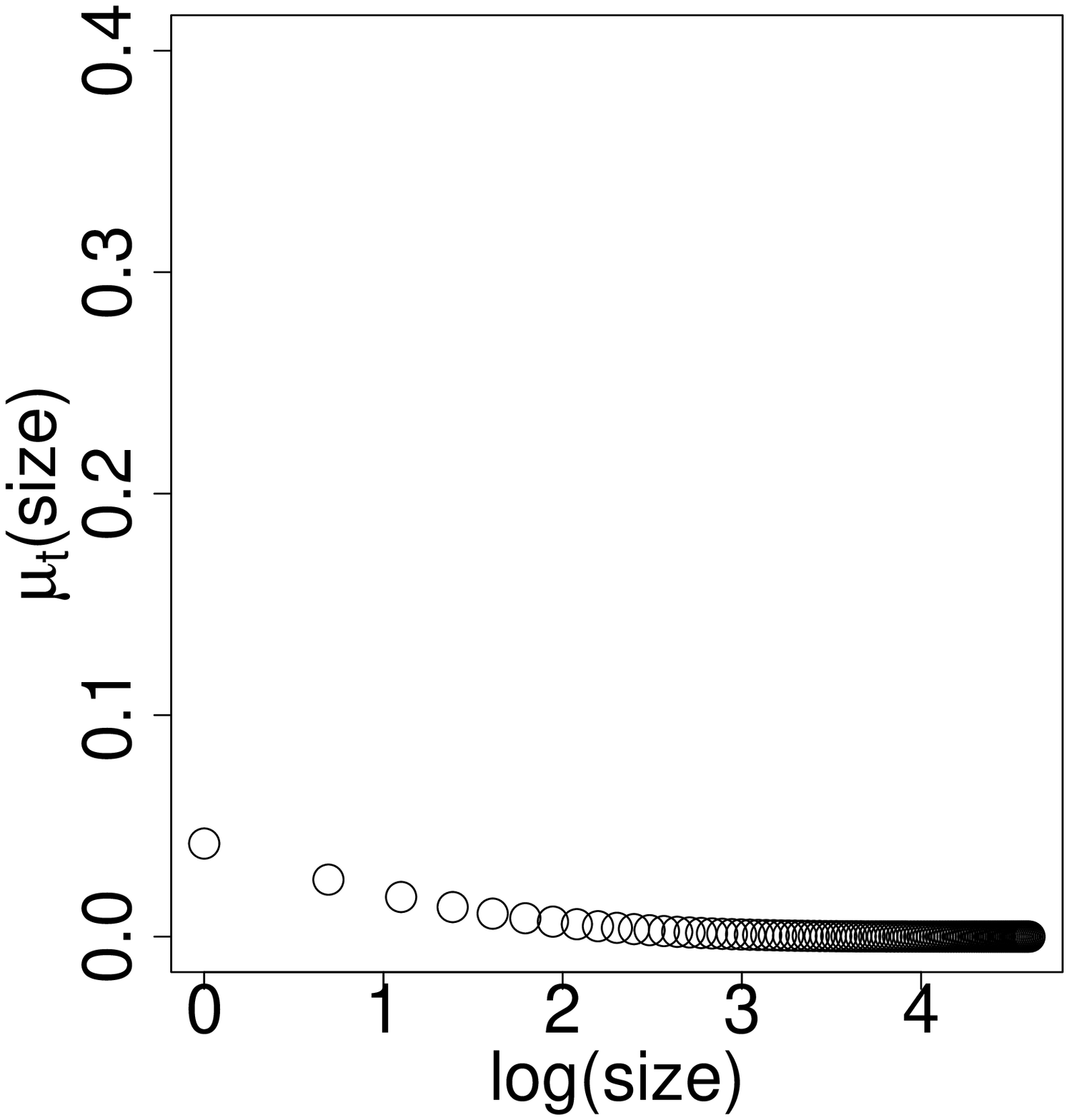}}
    }
    \subfigure[h][Soot kernel, $t=3.0$]
    {
        \resizebox{!}{0.30\linewidth}{\includegraphics{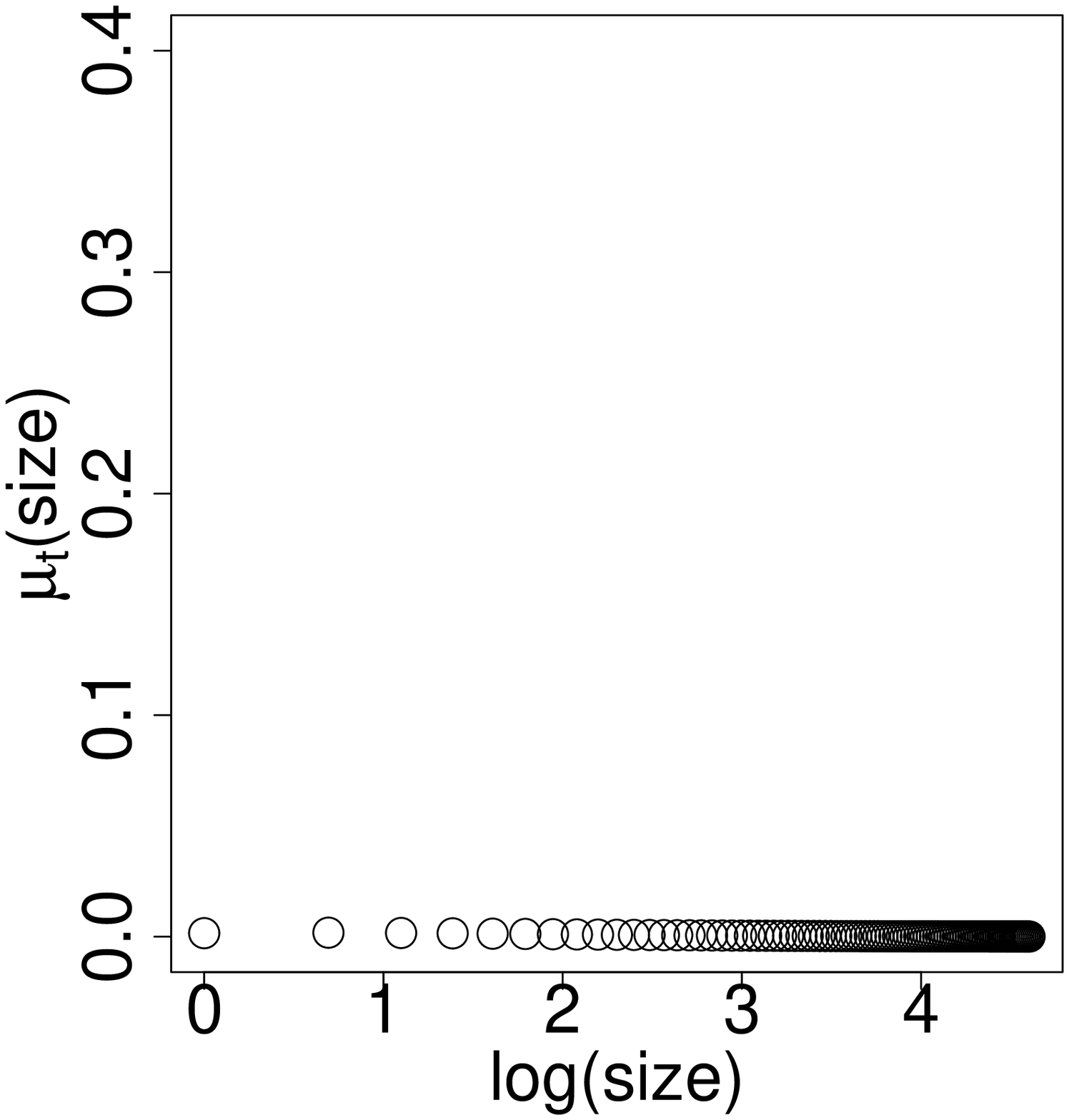}}
    }
 \end{center}
   \caption{$\mu_t$ as a function of $\log(\textrm{particle size})$}
   \label{solution_plots}
\end{figure}

To quantify the convergence of the empirical sensitivity
$$
\bar{\sigma}^{L\,;\,N}_t:= \frac{1}{L}\sum_{l=1..L}\sigma^{l,N}_t
$$
to $\sigma^\infty_t$ as $N$ increases we have plotted in Figure \ref{Convergence_study} the quantity
$$
d_{\textrm{var}}(N) = \sum_j \sum_{i\geq 1} \Big|\bigl(\bar{\sigma}^{L\,;\,N}_{t_j} - \sigma^\infty_{t_j}\bigr)(i)\Bigr|,
$$
where $\bar{\sigma_t}^{L\,;\,N}(i)$ and $\sigma_t^{\infty}(i)$ represent the empirical and real sensitivities at particle mass $i \in \mathbb{N}$ respectively, and $d_{\textrm{var}}(N)$ represents the total variation distance between the empirical sensitivity and the sensitivity itself summed over some chosen time points\footnote{For Figure \ref{Convergence_study}, the times points $\{t_j\}$ were chosen to be $0.125j$ for $j=1,\ldots,56$} $\{t_j\}$. These results empirically confirm Theorem \ref{ThmConvergence} (in this case where it does not apply), and quantify the speed of convergence as being of order $\frac{1}{N}$. The analogue result for the CD algorithm is given in \cite{PeterJamesMarkus}.
\begin{figure}[!h]
\begin{center}
    \subfigure[][Additive kernel]
    {
        \resizebox{!}{0.33\linewidth}{\includegraphics{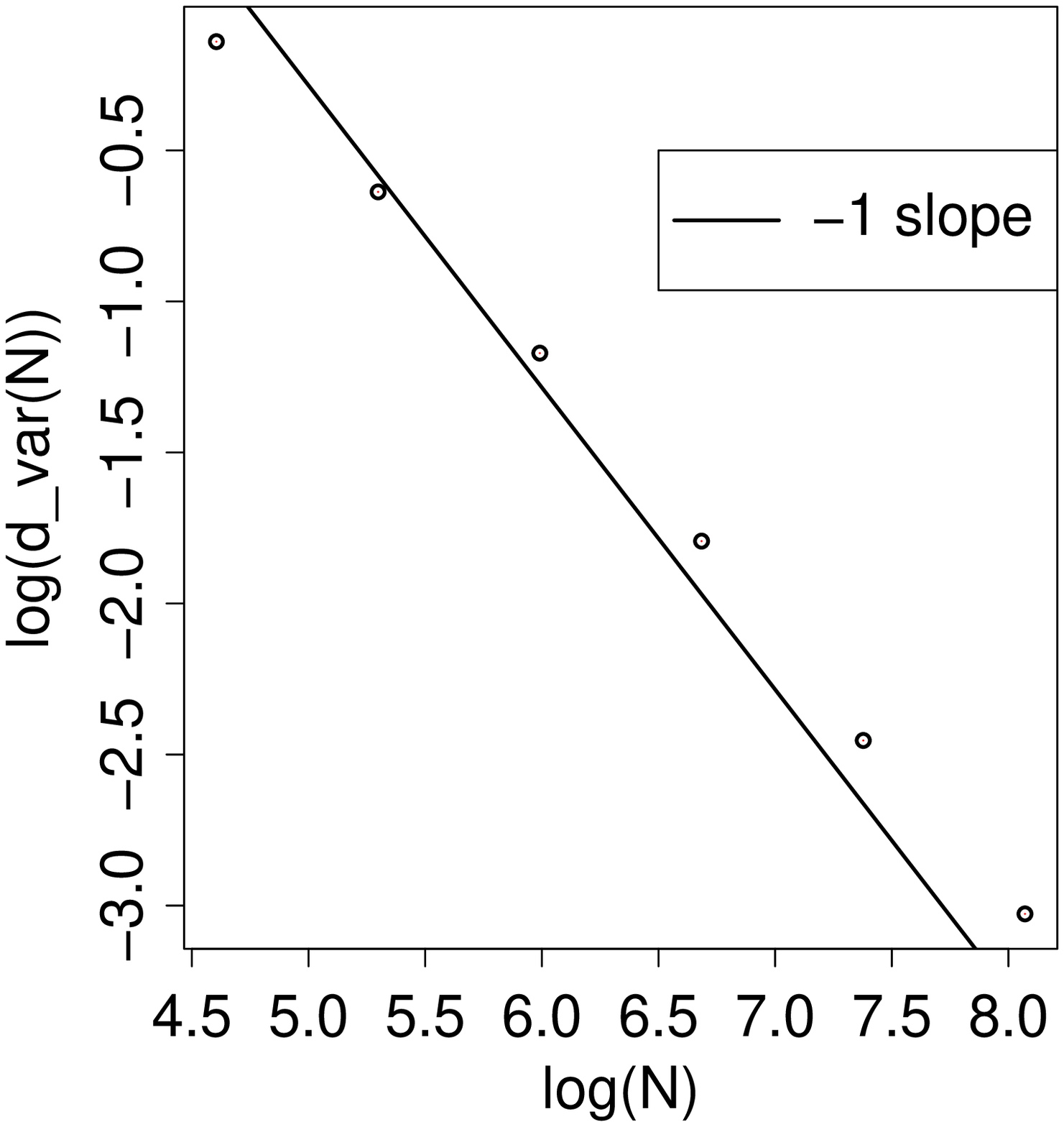}}
    }
    \subfigure[][Soot kernel]
    {
        \resizebox{!}{0.33\linewidth}{\includegraphics{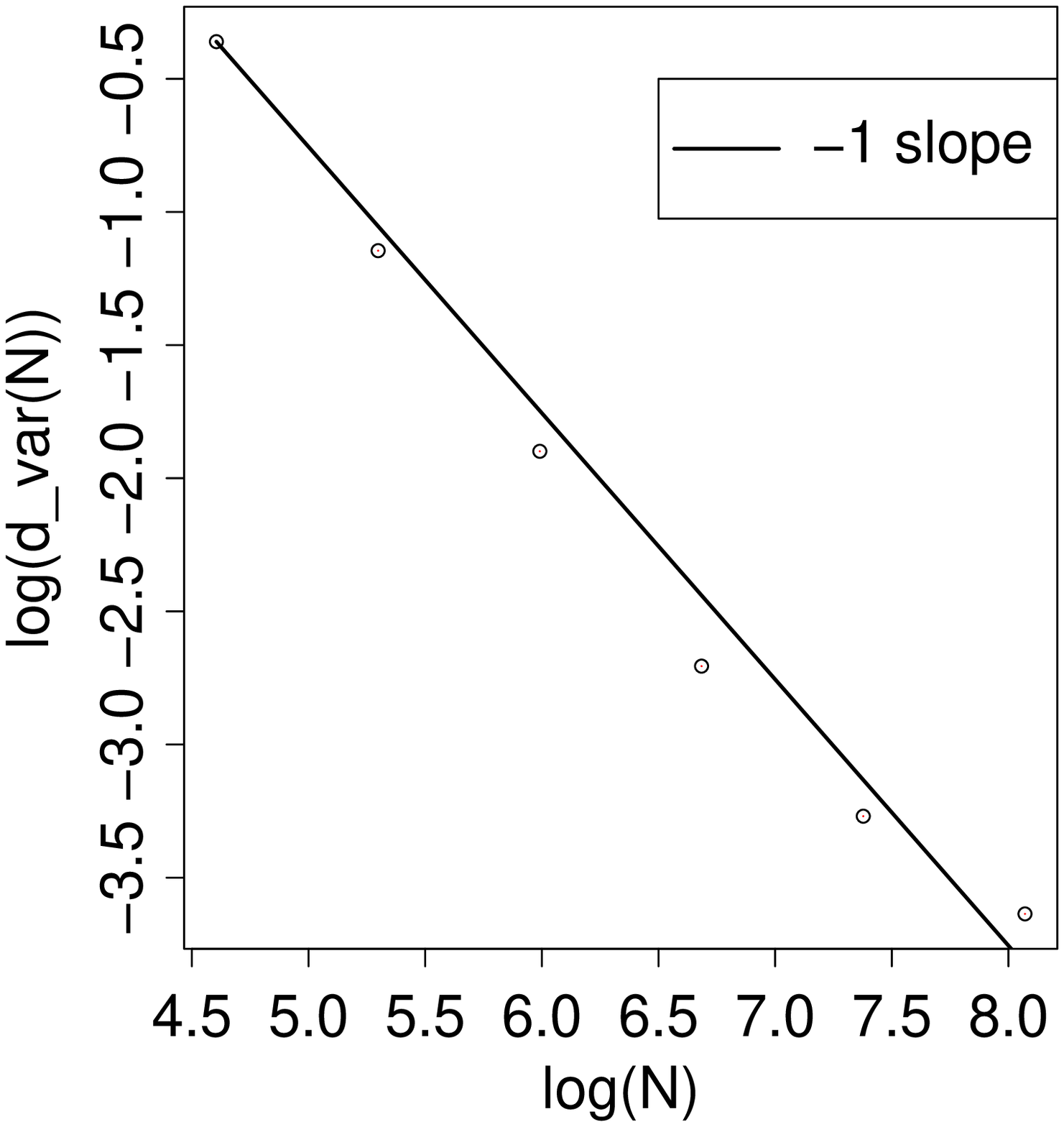}}
    }
    \caption{Convergence in $N$ of the ExactCoupling algorithm, $N=100 \times 2^i$ for $i=0,\ldots,5$, $NL=2\times10^8$.}
    \label{Convergence_study}
\end{center}
\end{figure}

\begin{figure}[!h]
\begin{center}
    \subfigure[][Additive kernel, $t=1.0$]
    {
        \resizebox{!}{0.3\linewidth}{\includegraphics{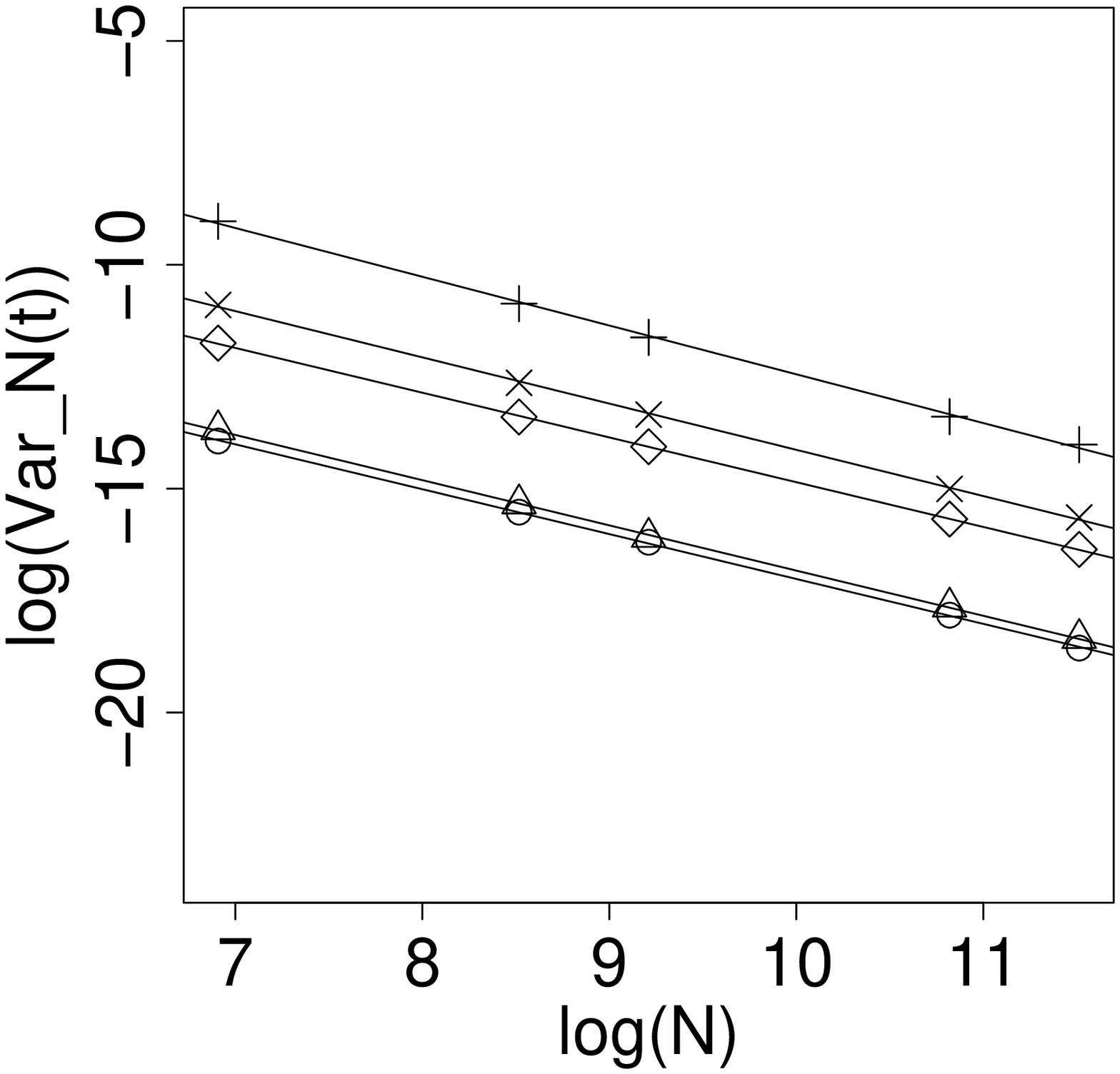}}
    }
    \subfigure[][Additive kernel, $t=5.0$]
    {
        \resizebox{!}{0.3\linewidth}{\includegraphics{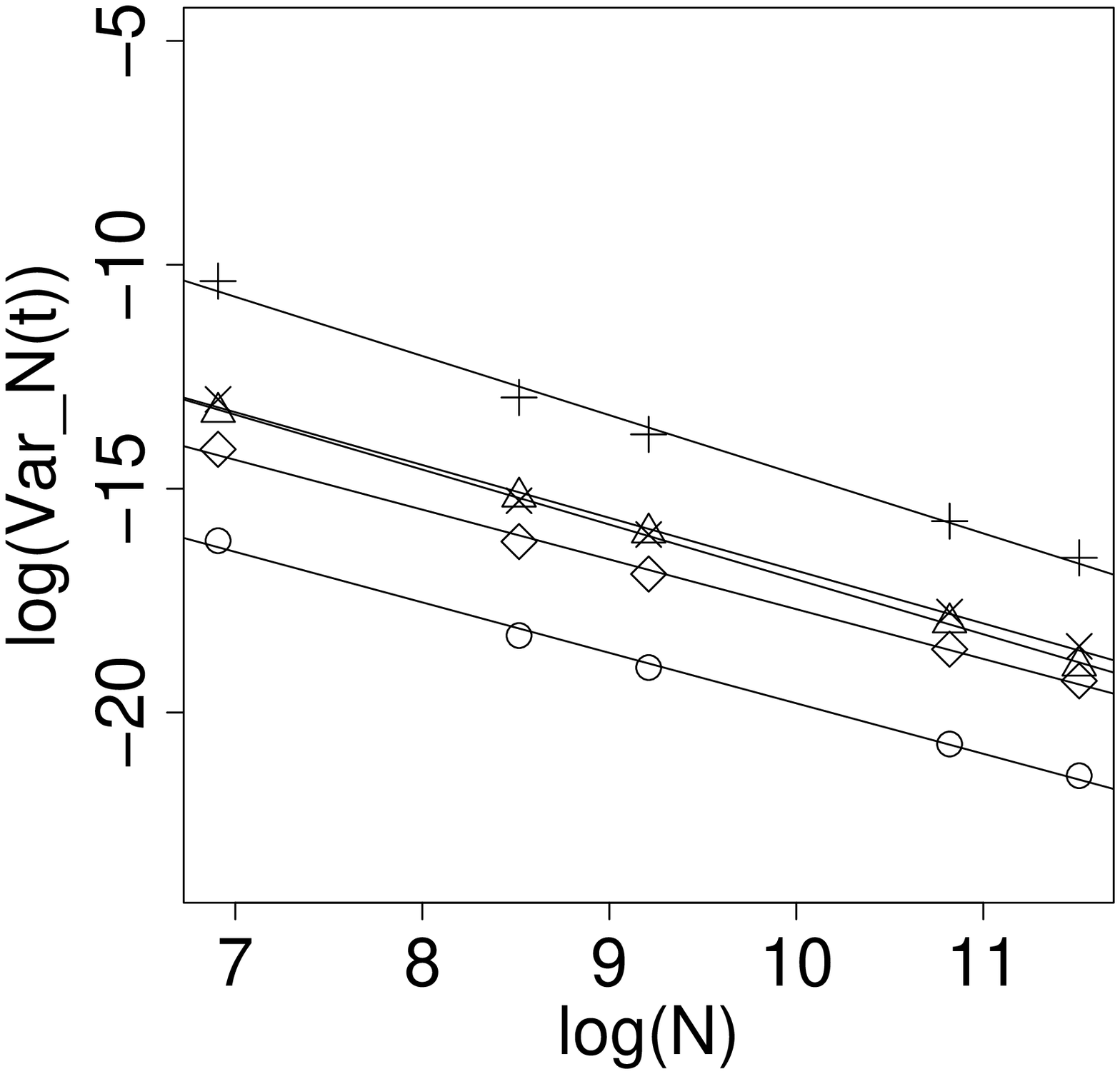}}
    }
\end{center}
\begin{center}
    \subfigure[][Soot kernel, $t=1.0$]
    {
        \resizebox{!}{0.3\linewidth}{\includegraphics{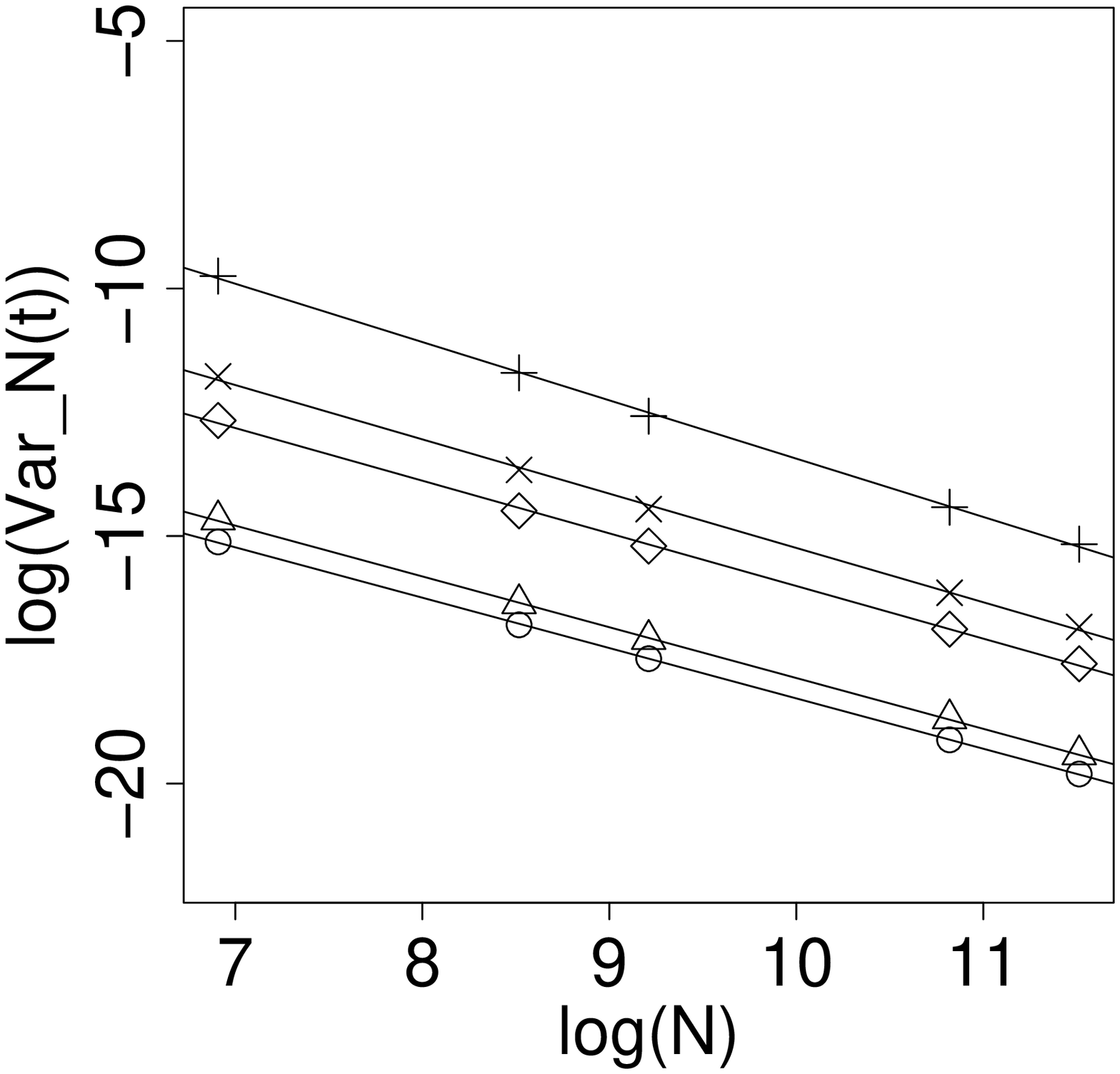}}
    }
    \subfigure[][Soot kernel, $t=5.0$]
    {
        \resizebox{!}{0.3\linewidth}{\includegraphics{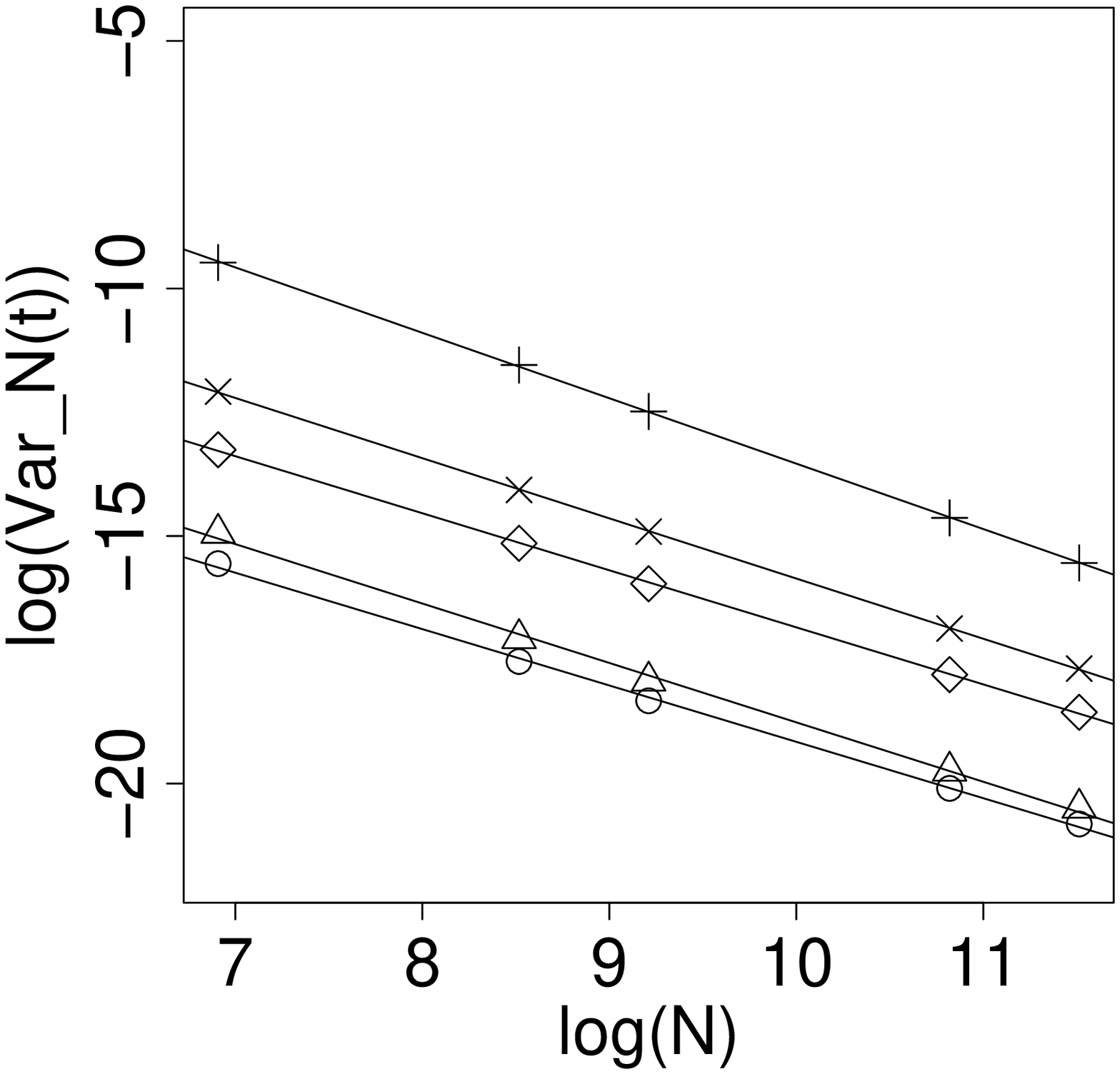}}
    }
    \caption{$\log \Var_N(t)$ as a function of $N$. The meaning of the symbols are as follows: Circles = ExactCoupling, Diamonds = ExactIndep, Triangles = CD($\delta\la=0.10$), Crosses = CD($\delta\la=0.05$), Pluses = CD($\delta\la=0.01$).}
    \label{Additive_statistical_error_versus_N}
\end{center}
\end{figure}

\subsection{Variance}
\label{Variance}
To analyse the variance of the random output of the algorithm we shall define the empirical variance at particle mass $i \in \mathbb{N}$ and time $t$ as
\begin{equation*}
\Var_N(i,t) := \frac{1}{L-1} \sum_{l=1}^L ( (\sigma_t^{l,N} - \bar{\sigma}_t^{L\,;\,N})(i) )^2
\end{equation*}
and shall take as a measure of the variance the quantity
\begin{equation}
\label{EmpiricalVariance}
\Var_N(t) := \sum_{i\geq 1} \Var_N(i,t).
\end{equation}
Figure \ref{Additive_statistical_error_versus_N} represents its graph as a function of $N$ using different algorithms. It shows that the ExactCoupling algorithm achieves a variance reduction by a factor $10^3$ compared to the CD algorithm. The plots also show that $\Var_N(t)$ is proportional to $\frac{1}{N}$, a fact that should be related to a central limit theorem.

\subsection{Computational efficiency}

Although section \ref{Variance} indicates that the ExactCoupling algorithm produces very accurate estimators of the sensitivity, it comes at the price of a computational time greater than the one needed by the CD algorithm. This comes from the fact that the latter algorithm being essentially a Marcus-Lushnikov algorithm, it uses a generally decreasing amount of information, as the number of sensitivity particles decreases with time. On the other hand, the ExactCoupling algorithm has to deal with more and more sensitivity particles, whose number tends to grow exponentially. To see whether the gain of accuracy given by the ExactCoupling algorithm is worth the effort we propose two criteria.

\subsubsection{CPU time to reach a certain level of accuracy}
\label{CPU_time_subsubsection}

Fix the observation time $t$ (we choose large enough $t$ so that the particle system has experienced many jumps, and therefore the variances are expected to be larger - see Figure \ref{solution_plots}. Given a certain level of accuracy, $v$, find for each algorithm the smallest $N$ for which $\Var_N(t)$ is smaller than $v$. See what computational time is needed to run the algorithm for this $N$ (during an evolution time $t$ for the particle system). Tables \ref{Additive_Smallest_N} and \ref{Soot_Smallest_N} show that the ExactCoupling algorithm remains mostly better than the CD algorithm. It also shows that it converges much quicker to the true sensitivity than the CD algorithm does. Note that for the soot kernel the CD algorithm with $\delta\la = 0.1$, $10^5$ initial particles are not sufficient to reach the given level of accuracy; this setup already requires a CPU time equal to $1058.91$ seconds. The comparison with the corresponding time for the ExactCoupling algorithm is greatly in favour of the latter.

\begin{table}[!h]
\caption{Additive kernel, $v = 1.43 \times 10^{-4}$}
\label{Additive_Smallest_N}
\begin{center}
\begin{tabular}{l|ll|ll}
$t$ & 1.0 & 1.0 & 3.0 & 3.0\\
\hline
algorithm & ExactCoupling & CD ($\delta\la=0.10$) & ExactCoupling & CD ($\delta\la=0.10$)\\
$N$ & 6500 & 55000 & 2100 & 16250\\
$t_{\textrm{run}}$ (secs) & 281.15 & 593.99 & 99.22 & 213.34\\
\end{tabular}
\end{center}
\end{table}
\begin{table}[!h]
\caption{Soot kernel, $v = 2.57 \times 10^{-5}$}
\label{Soot_Smallest_N}
\begin{center}
\begin{tabular}{l|ll|ll}
$t$ & 1.0 & 1.0 & 3.0 & 3.0\\
\hline
algorithm & ExactCoupling & CD ($\delta\la=0.10$) & ExactCoupling & CD ($\delta\la=0.10$)\\
$N$ & 10000 & 100000 & 6350 & 55000\\
$t_{\textrm{run}}$ (secs) & 379.01 & 1058.91 & 382.15 & 1104.24\\
&&($v$ not reached) &&
\end{tabular}
\end{center}
\end{table}

\subsubsection{Gain factor}
\label{Gain_factor}

Eibeck and Wagner introduced in \cite{Wagner3} another quantity to compare the relative efficiency of two algorithms. Fix the observation time $t$. Given a setup ($K(\cdot,\cdot), N, L$), denote by $T^{\textrm{EC}}(t)$ and $T^{\textrm{CD}}(t)$ the empirical mean CPU time needed by the ExactCoupling and CD algorithms to be run up to time $t$. Denote also by $\Var_N^{\textrm{EC}}(t)$ and $\Var_N^{\textrm{alg}}(t)$ the empirical variances given by formula \eqref{EmpiricalVariance} when computed using ExactCoupling and the given algorithm `alg' respectively. The \textit{gain factor} of an algorithm \textit{over ExactCoupling}, similar to that as introduced by Eibeck and Wagner, is defined here by the ratio
$$
\frac{T^{\textrm{EC}}(t)\,\Var_N^{\textrm{EC}}(t)}{T^{\textrm{alg}}(t)\,\Var_N^{\textrm{alg}}(t)}
$$
It is related in some way to the analysis made in section \ref{CPU_time_subsubsection}. See section $5$ of \cite{Wagner3}. Figures \ref{Add_inefficiency} and \ref{Soot_inefficiency} plot the reciprocal gain (its logarithm) as a function of time. Triangles, pluses and crosses represent data of the CD algorithm, for $\delta\la=0.01,\,0.05$ and $0.10$ respectively, circles represent data of the ExactIndep algorithm, and the horizontal line at zero represents the threshold for ExactCoupling.\\

\begin{figure}[!h]
    \subfigure[][$N=10^3$]
    {
        \resizebox{!}{0.3\linewidth}{\includegraphics{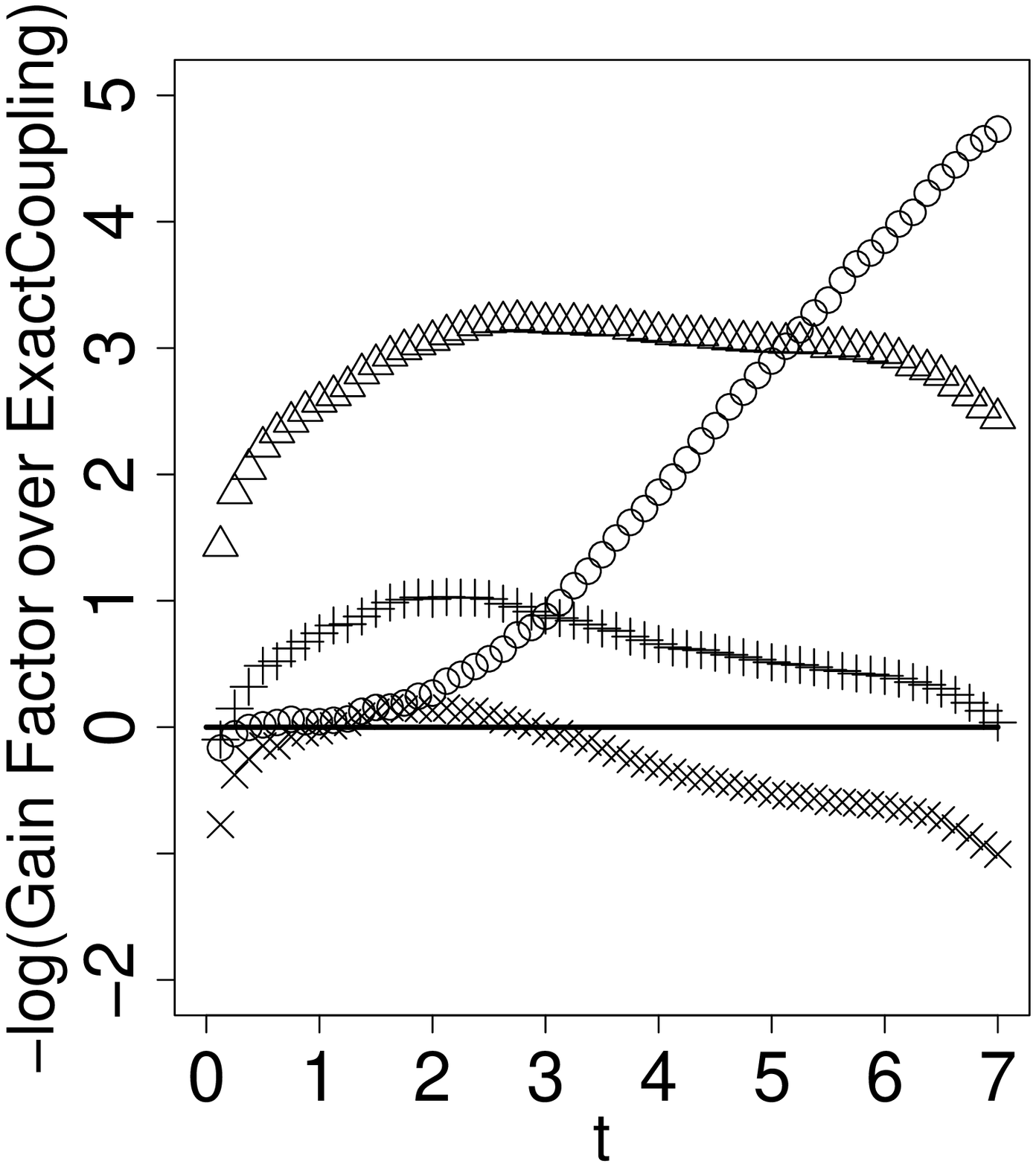}}
    }
    \subfigure[][$N=10^4$]
    {
        \resizebox{!}{0.3\linewidth}{\includegraphics{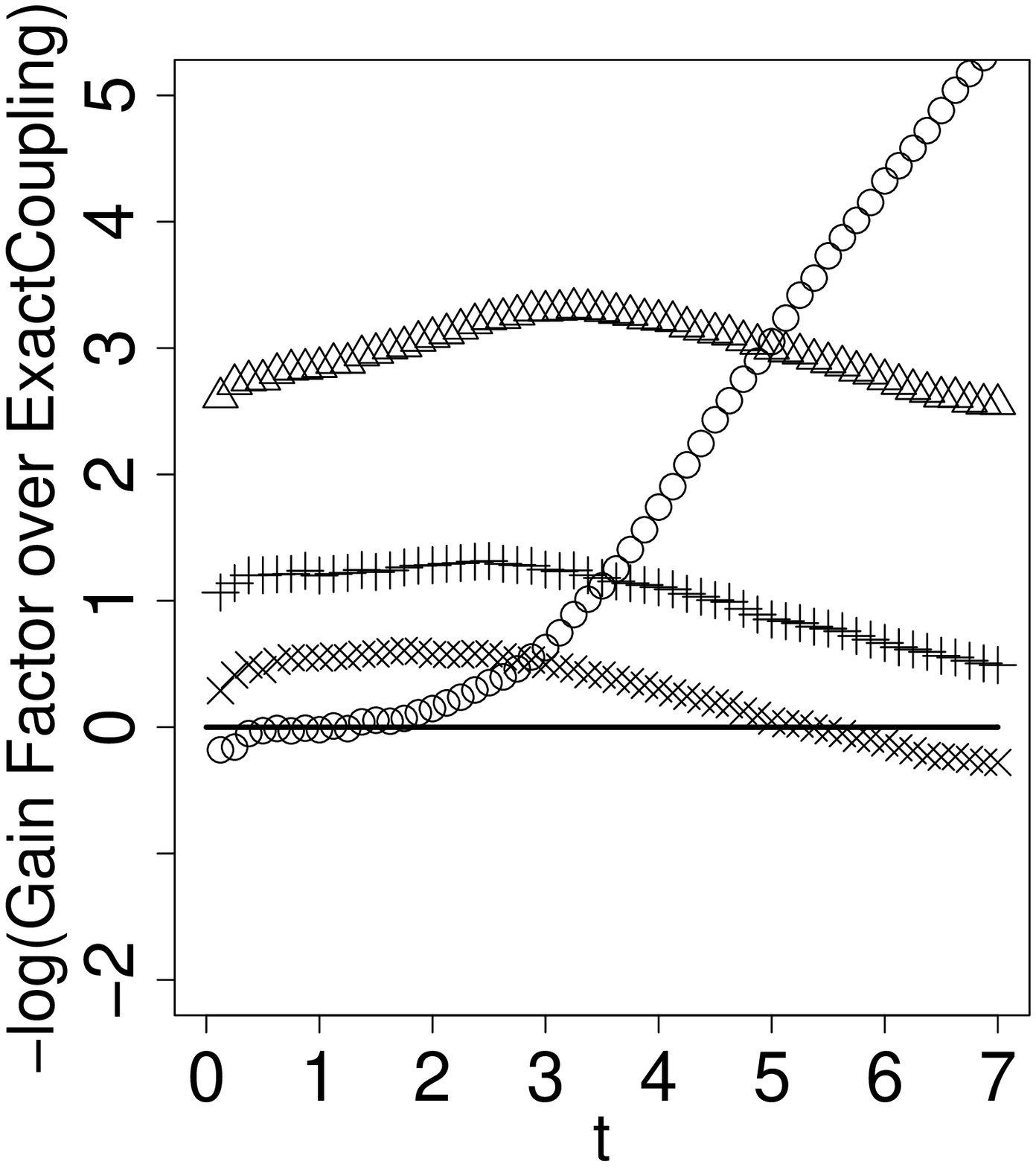}}
    }
    \subfigure[][$N=10^5$]
    {
        \resizebox{!}{0.3\linewidth}{\includegraphics{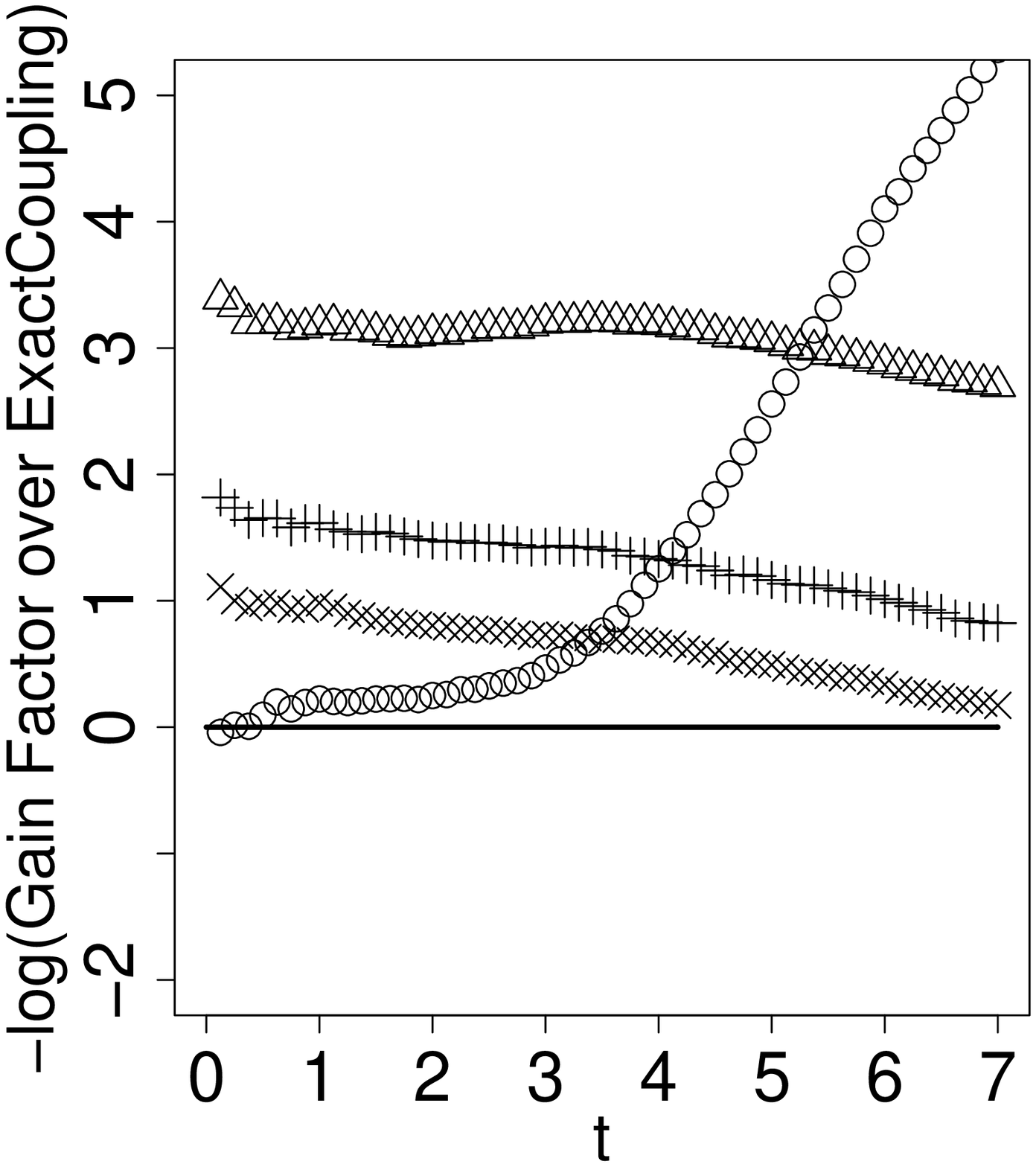}}
    }
    \caption{Additive kernel: $\log(\textrm{Gain factor}^{-1})$ as a function of $t$. The meanings of the symbols are as follows: Circles = ExactIndep, Crosses = CD($\delta\la=0.10$), Pluses = CD($\delta\la=0.05$), Triangles = CD($\delta\la=0.01$). The horizontal line is the threshold value $1.0$ for ExactCoupling.}
    \label{Add_inefficiency}
\end{figure}

\begin{figure}[!h]
    \subfigure[][$N=10^3$]
    {
        \resizebox{!}{0.3\linewidth}{\includegraphics{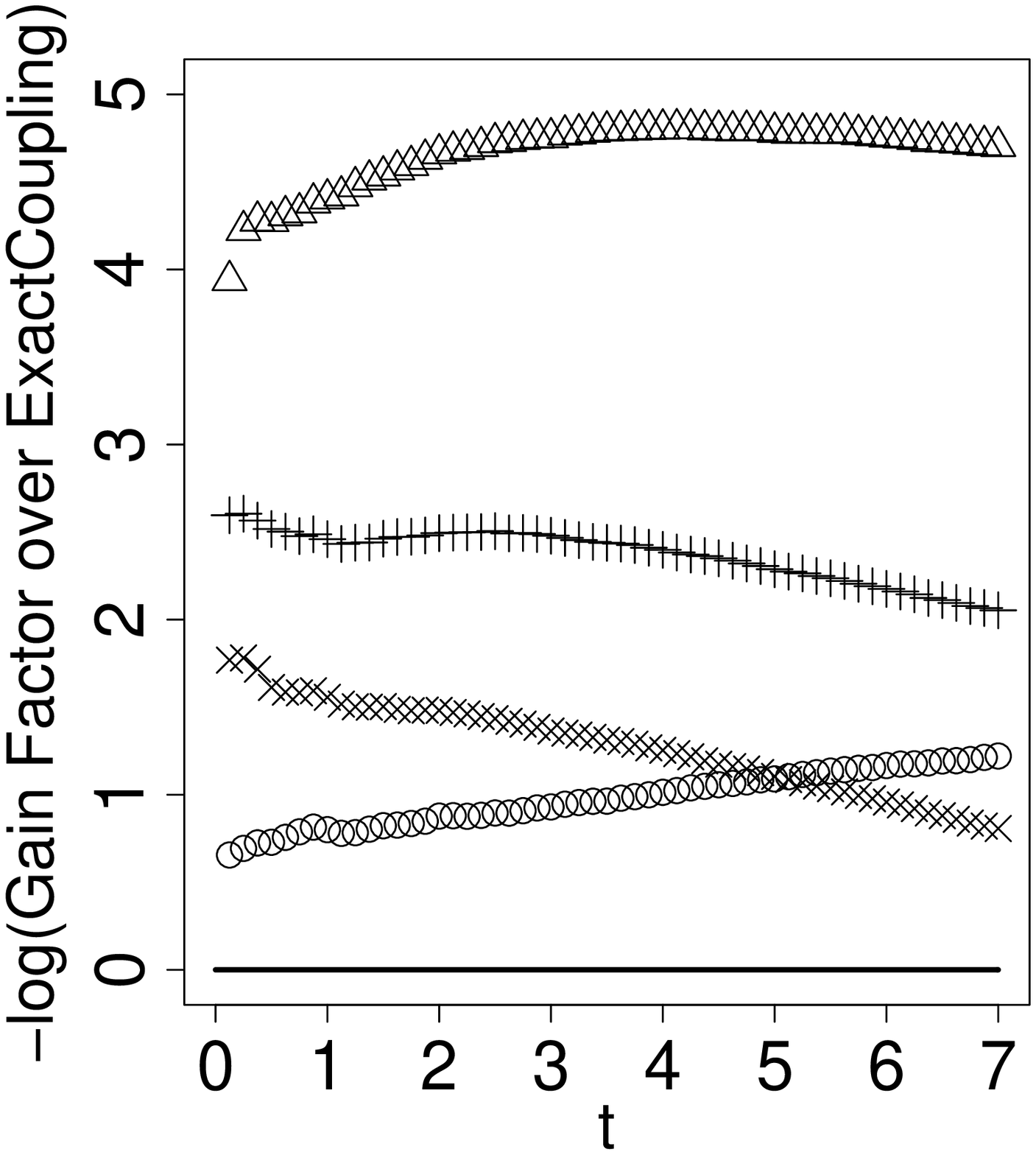}}
    }
    \subfigure[][$N=10^4$]
    {
        \resizebox{!}{0.3\linewidth}{\includegraphics{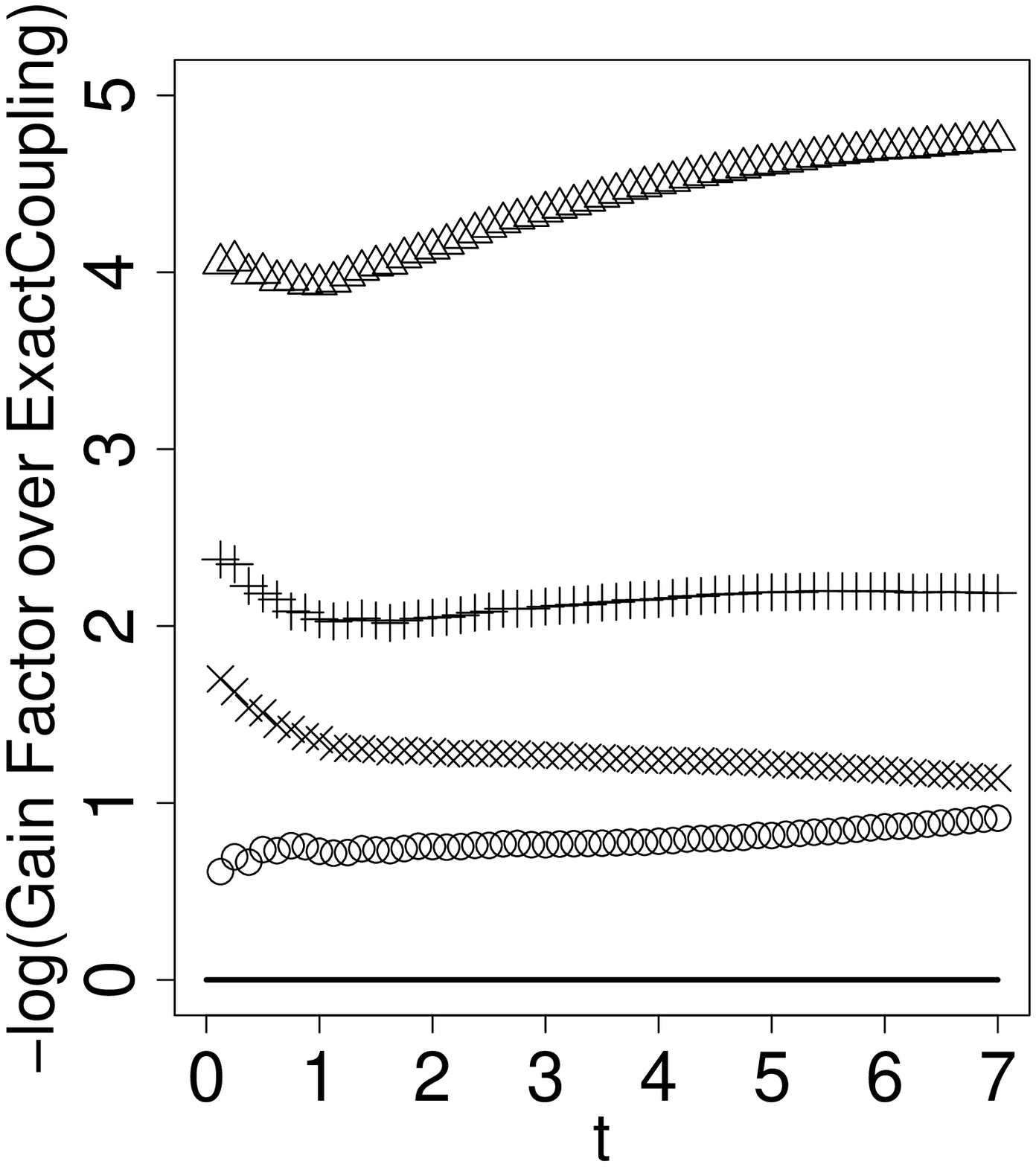}}
    }
    \subfigure[][$N=10^5$]
    {
        \resizebox{!}{0.3\linewidth}{\includegraphics{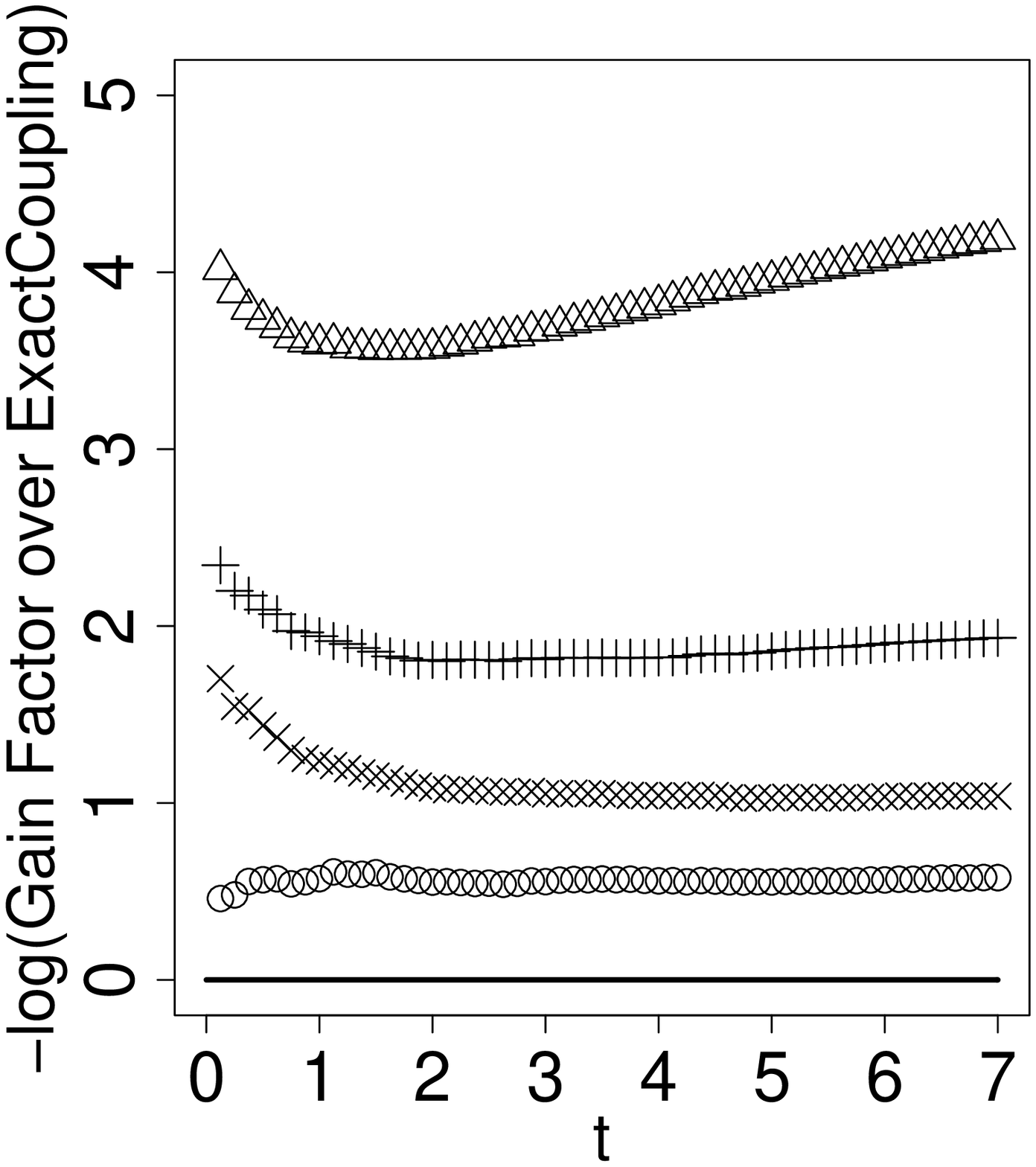}}
    }
    \caption{Soot kernel: $\log(\textrm{Gain factor}^{-1})$ as a function of $t$.  The meanings of the symbols are as follows: Circles = ExactIndep, Crosses = CD($\delta\la=0.10$), Pluses = CD($\delta\la=0.05$), Triangles = CD($\delta\la=0.01$). The horizontal line is the threshold value $1.0$ for ExactCoupling.}
    \label{Soot_inefficiency}
\end{figure}

Figures \ref{Add_inefficiency} and \ref{Soot_inefficiency} show good results. By and large, the CD algorithms appear to be considerably inferior to the ExactCoupling algorithm for the Soot kernel, and the ExactIndep algorithm in either performs slightly better than the CD ($\delta\la=0.10$). There appears to be little to moderate difference in behaviour over different values of $N$.

The picture is different for the Additive kernel. For $N=1000$, we find that the CD $(\delta\la=0.1)$ is better than the ExactCoupling, at least for very small or large times. This disadvantage gradually disappears over larger $N$ --- this is due to the increased probability of cancellations for larger $N$ which reduces the number of particles in the ensembles and therefore the CPU times. Other than this, the ExactCoupling algorithm maintains a substantial lead over the other algorithms.

\section{Conclusions}

A stochastic particle system approximation to the parametric sensitivity in Smoluchowski's coagulation equation was introduced. Rather than taking a finite difference approach to calculating sensitivities, we considered the direct parametric derivative of \eqref{SmoDiscreteSymm}, and developed a Monte-Carlo algorithm which would approximate its solution. The particle system approximation was proved to converge weakly to the solution of the sensitivity equation \eqref{SmoDiscreteSymm}, as the number of particles increases indefinitely.

The first algorithm developed (ExactIndep) allows for an exponential increase in the number of sensitivity particles. We sought to reduce this increase using several tricks: Cancellation removes `unnecessary' sensitivity particles which are needed to describe it, whilst coupling prevents their creation. These make a significant reduction to the number of particles in the ensemble. Furthermore, the resampling method puts a cap on the total number of sensitivity particles, thus stopping their exponential escalation. This gives us the ExactCoupling algorithm.

In the Numerical Results section, it was empirically confirmed that the order of convergence is $O(1/N)$ where $N$ is the number of initial particles. We then compared the Exact algorithms with those found in \cite{PeterJamesMarkus}, named here CD algorithms. It was shown that the variance of the sensitivity estimators were orders of magnitude smaller for the ExactCoupling algorithm than for the CD algorithms. However this came at the price of longer CPU run times. Two measures of efficiency, taking both the variance and the CPU time into account, were then considered. The ExactCoupling algorithm happens to require much smaller time to to reach a fixed level of error than any CD algorithm, and the gain factor, as defined in \cite{Wagner3}, also happens to be in favour of the ExactCoupling algorithm, most of the time. This definitely gives a clear advantage of our approach over finite difference methods.

However, both methods have some inherent drawback: unlike the adjoint method \cite{Vik06}, they are unidimensional in nature and compute sensitivity only for a fixed value of the parameter. It would be useful to construct a particle system approximation which do not have these weaknesses. Also, although the convergence theorem established in section \ref{SectionTheoretic} in a general framework is quite encouraging, it is not clear whether the algorithm will be as efficient as above if particles's masses can take any positive value. We leave the investigation of these questions for future work.



\end{document}